\documentclass[11pt,reqno]{amsart}
\usepackage[left=2cm,right=2cm,top=3cm,bottom=3cm]{geometry}
\usepackage{amssymb}
\usepackage{soul}
\usepackage{graphicx}
\usepackage[new]{old-arrows}
\usepackage{euscript}
\usepackage{cite}
\usepackage{leftindex}
\usepackage{color}
\usepackage{tikz-cd}
\usetikzlibrary{tqft}
\usepackage{mathtools}
\usepackage{hyperref}
\hypersetup{
	colorlinks=true,       			% false: boxed links; true: colored links
	linkcolor=blue,          			% color of internal links
	citecolor=blue,		 		% color of links to bibliography
	filecolor=blue,      				% color of file links
	urlcolor=blue           			% color of external links
}
\usepackage{amsmath}
\usepackage{amsthm}
\usepackage{tikz}
\usepackage{amscd}
\usepackage{latexsym}
\usepackage{epsfig}
\usepackage{graphicx}
\usepackage{caption}
\usepackage{rotating}
\usepackage{mathtools}
\usepackage{xypic}
\usepackage{bigints} 
\usepackage{longtable}
\usepackage{extpfeil}
\usepackage{tensor}

\newcommand{\g}{\mathfrak{g}}
\renewcommand{\c}{\mathfrak{c}}

\newcommand{\ad}{\mathrm{ad}}

\newcommand{\sll}[1]{\mkern-4mu\mathbin{/\mkern-5mu/}_{\mkern-4mu{#1}}}

\newcommand{\Lim}[1]{\raisebox{0.5ex}{\scalebox{1.0}{$\displaystyle \lim_{#1}\;$}}}

\newcommand\junk[1]{}

\numberwithin{equation}{section}

\newtheorem{theorem}{Theorem}[section]
\newtheorem{proposition}[theorem]{Proposition}
\newtheorem{corollary}[theorem]{Corollary}
\newtheorem{lemma}[theorem]{Lemma}

\newtheorem{mtheorem}{Main Theorem}

\theoremstyle{definition}
\newtheorem{definition}[theorem]{Definition}

\newtheorem{remark}[theorem]{Remark}

\title[Poisson blow-ups and the adjoint quotient]{Poisson blow-ups and the adjoint quotient}

\author[Peter Crooks]{Peter Crooks}
\author[Iva Halacheva]{Iva Halacheva}
\address[Peter Crooks]{Department of Mathematics and Statistics\\ Utah State University \\ 3900 Old Main Hill \\ Logan, UT 84322, USA}
\email{peter.crooks@usu.edu}
\address[Iva Halacheva]{Department of Mathematics \\ Northeastern University \\ 360 Huntington Avenue \\ Boston, MA 02115, USA}
\email{i.halacheva@northeastern.edu}
\subjclass{17B63 (primary); 17B08, 14L30 (secondary)}
\keywords{blow-up, Poisson scheme, adjoint quotient, integrable system}

\begin{document}
	
	\begin{abstract} 
		We leverage Polishchuk's Poisson blow-up criterion in the context of algebro-geometric integrable systems. In more detail, one may associate an integrable system $\tau:\mathfrak{X}\longrightarrow\mathfrak{B}$ to each affine Poisson scheme $\mathfrak{X}$ over $\mathbb{C}$. We prove that the blow-ups of $\mathfrak{X}$ along fibers of $\tau$ are Poisson schemes occurring in a family $\widetilde{\mathfrak{X}\times\mathfrak{B}}\longrightarrow\mathfrak{B}$, where $\widetilde{\mathfrak{X}\times\mathfrak{B}}$ is itself a Poisson scheme. This result is subsequently specialized to the adjoint quotient $\tau:\g\longrightarrow\g\sll{}{G}\eqqcolon\mathfrak{c}$ of a finite-dimensional complex semisimple Lie algebra $\g$ with integrating algebraic group $G$. We show that the family $\widetilde{\g\times\mathfrak{c}}\longrightarrow\mathfrak{c}$ is flat, conical, and equipped with a canonical Poisson Hamiltonian $G$-variety structure. We also obtain Poisson-geometric results on the fibers of this family, which are blow-ups of $\g$ along regular adjoint orbit closures. 
	\end{abstract}
	
	\maketitle
	
	\tableofcontents

	\section{Introduction}
	
	\subsection{Motivation and context} The blow-up construction is ubiquitous in both classical and modern algebraic geometry. Its connection to algebraic Poisson geometry was initiated by Polishchuk \cite{pol:97} in 1997, and features in several works over the last decade \cite{pym:18,bai-cav-van:19,van:18,che-che-yan-yan:25,geu-zam:21,lap-mat-pym-zup:26,lin-pym:24}. At the heart of this connection is \textit{Polishchuk's criterion} for the blow-up of a Poisson scheme $X$ along a quasi-coherent ideal sheaf $\mathcal{I}\subseteq\mathcal{O}_X$ to be Poisson: if $\{\mathcal{O}_X,\mathcal{I}\}\subseteq\mathcal{I}$ and $\{\mathcal{I},\mathcal{I}\}\subseteq\mathcal{I}^2$, then the blow-up carries a unique Poisson structure for which the blow-up morphism is Poisson and the exceptional divisor is a Poisson subscheme. Polishchuk also shows the converse to be true under mild hypotheses.
	
	In light of the above, it is natural to seek interesting Poisson schemes arising from Polishchuk's criterion. Some examples are immediate: the blow-up of a smooth two-dimensional Poisson variety at a point is Poisson if and only if the Poisson structure vanishes at the point. Recently studied examples include del Pezzo surfaces, as well as blow-ups of certain Calabi--Yau six-folds \cite{che-che-yan-yan:25}. On the other hand, there are few concrete techniques for constructing examples. There also appear to be few Lie-theoretic examples, e.g. Poisson varieties occurring as blow-ups of the dual of a finite-dimensional Lie algebra. The central purpose of this work is to address these two issues.
	
	\subsection{Main results}
	We associate an integrable system $\tau:\mathfrak{X}\longrightarrow\mathfrak{B}$ to each affine Poisson scheme $\mathfrak{X}$ over $\mathbb{C}$. This leads us to consider the blow-ups $\mathrm{Bl}_{\tau^{-1}(b)}(\mathfrak{X})\longrightarrow\mathfrak{X}$ of $\mathfrak{X}$ along the fibers $\tau^{-1}(b)$, where $b\in\mathfrak{B}$ ranges over closed points. To investigate these blow-ups systematically, we consider the graph $\Gamma\coloneqq\mathfrak{X}\times_{\mathfrak{B}}\mathfrak{B}\subseteq\mathfrak{X}\times\mathfrak{B}$ and blow-up $\widetilde{X\times\mathfrak{B}}\coloneqq\mathrm{Bl}_{\Gamma}(\mathfrak{X}\times\mathfrak{B})\longrightarrow\mathfrak{X}\times\mathfrak{B}$. Let $\pi_{\mathfrak{B}}:\widetilde{X\times\mathfrak{B}}\longrightarrow\mathfrak{B}$ denote the result of composing $\widetilde{\mathfrak{X}\times\mathfrak{B}}\longrightarrow\mathfrak{X}\times\mathfrak{B}$ with the projection $\mathfrak{X}\times\mathfrak{B}\longrightarrow\mathfrak{B}$.
	
	\begin{mtheorem}\label{Theorem: Main Theorem 1}
		The following statements are true.
		\begin{itemize}
			\item[\textup{(i)}] For a closed point $b\in\mathfrak{B}$, the pair $(\mathfrak{X},\tau^{-1}(b))$ satisfies Polishchuk's criterion. In particular, $\mathrm{Bl}_{\tau^{-1}(b)}(\mathfrak{X})$ is Poisson and its exceptional divisor is a Poisson subscheme.
			\item[\textup{(ii)}] The pair $(\mathfrak{X}\times\mathfrak{B},\Gamma)$ satisfies Polishchuk's criterion. In particular, $\widetilde{\mathfrak{X}\times\mathfrak{B}}$ is Poisson and its exceptional divisor is a Poisson subscheme.
			\item[\textup{(iii)}] For a closed point $b\in\mathfrak{B}$, there is a canonical scheme isomorphism $\pi_{\mathfrak{B}}^{-1}(b)\cong\mathrm{Bl}_{\tau^{-1}(b)}(\mathfrak{X})$. Furthermore, the following composite morphism is Poisson: $$\mathrm{Bl}_{\tau^{-1}(b)}(\mathfrak{X})\overset{\cong}\longrightarrow\pi_{\mathfrak{B}}^{-1}(b)\longhookrightarrow\widetilde{\mathfrak{X}\times\mathfrak{B}}.$$
		\end{itemize}
	\end{mtheorem}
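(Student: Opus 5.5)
The integrable system here is presumably the one in which $\mathfrak{B}=\mathrm{Spec}(Z)$, where $Z\subseteq A\coloneqq\mathbb{C}[\mathfrak{X}]$ is the Poisson centre (the Casimirs), and $\tau$ is induced by the inclusion $Z\hookrightarrow A$. All three parts then reduce to ideal-theoretic computations with Casimirs, combined with Polishchuk's criterion (with its uniqueness statement) and the compatibility of blow-ups with base change along closed points.

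\textbf{Part (i).} Let $\mathfrak{m}_b\subseteq Z$ be the maximal ideal of $b$. The ideal of $\tau^{-1}(b)$ is $I_b=\mathfrak{m}_bA$. Since every generator $z\in\mathfrak{m}_b$ is a Casimir, the Leibniz rule gives
$$\{f,za\}=z\{f,a\}\in I_b\quad\text{and}\quad\{z a,z'a'\}=zz'\{a,a'\}\in I_b^2$$
for $z,z'\in\mathfrak{m}_b$ and $f,a,a'\in A$. Hence $\{A,I_b\}\subseteq I_b$ and $\{I_b,I_b\}\subseteq I_b^2$, so Polishchuk's criterion applies.

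\textbf{Part (ii).} Give $\mathfrak{X}\times\mathfrak{B}$ the product Poisson structure with zero bracket on $Z$, so its coordinate ring is $A\otimes Z$ with $\{a\otimes z,a'\otimes z'\}=\{a,a'\}\otimes zz'$. The graph $\Gamma$ is cut out by $J$, which is generated by the elements $z\otimes1-1\otimes z$ for $z\in Z$. Each such generator is a Casimir of $A\otimes Z$: the factor $z\otimes 1$ is central because $z\in Z$, and $1\otimes z$ is central because the bracket on $Z$ is zero. The computation of part (i) then applies verbatim to any ideal generated by Casimirs, giving $\{A\otimes Z,J\}\subseteq J$ and $\{J,J\}\subseteq J^2$. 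I would state this once as a lemma: \emph{an ideal generated by Casimirs satisfies Polishchuk's criterion}.

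\textbf{Part (iii).} First the scheme isomorphism. The fibre $\pi_{\mathfrak{B}}^{-1}(b)$ is $\mathrm{Proj}\bigl(\bigoplus_n J^n\otimes_Z Z/\mathfrak{m}_b\bigr)$. Under $(A\otimes Z)/(\mathfrak{m}_b)\cong A$, the ideal $J$ maps onto $I_b$. The key point is to show that the natural surjection
$$\textstyle\bigoplus_n J^n/\mathfrak{m}_bJ^n\longrightarrow\bigoplus_n I_b^n$$
is an isomorphism, or at least that its kernel is irrelevant. This is where I expect the main obstacle, since blow-ups do not commute with base change in general.

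I would get around this using the automorphism $\phi$ of $\mathfrak{X}\times\mathfrak{B}$ over $\mathfrak{B}$ given on rings by $a\otimes z\mapsto a\otimes z$ composed with the change of coordinates replacing $1\otimes z$ by $z\otimes1-1\otimes z$. More cleanly, one can note that $\mathfrak{X}\times\mathfrak{B}\to\mathfrak{B}$ is flat and $\Gamma\cong\mathfrak{X}$ is a section-like subscheme. Concretely, $A\otimes Z\cong A\otimes_Z(Z\otimes Z)$, and under this identification $J$ is the extension of the diagonal ideal $\Delta\subseteq Z\otimes Z$. Therefore $\bigoplus_n J^n=A\otimes_Z\bigoplus_n\Delta^n$, where $A$ is a $Z$-module via the first factor. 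Specializing the second factor at $b$ gives $A\otimes_Z\bigl(\bigoplus_n\Delta^n\otimes_Z Z/\mathfrak{m}_b\bigr)$, and $\bigoplus_n\Delta^n\otimes_Z\mathbb{C}_b\cong\bigoplus_n\mathfrak{m}_b^n$ because $Z\otimes Z$ is free over the second factor. Finally $A\otimes_Z\bigoplus_n\mathfrak{m}_b^n$ surjects onto $\bigoplus_n I_b^n$, and I would check that it agrees with it after Proj. If flatness of $A$ over $Z$ is not available in this generality, I would fall back on identifying the Proj's directly, using that the kernel is killed by powers of the irrelevant ideal; this is the step I would examine most carefully.

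Poisson-ness of the composite then follows from uniqueness. The closed embedding $\pi_{\mathfrak{B}}^{-1}(b)\hookrightarrow\widetilde{\mathfrak{X}\times\mathfrak{B}}$ is cut out by pullbacks of $\mathfrak{m}_b$, and these are Casimirs on the blow-up: the Poisson bracket on the Rees algebra is induced from $A\otimes Z$, where elements of $1\otimes\mathfrak{m}_b$ are central. So the fibre is a Poisson subscheme. Its induced structure makes the blow-down map to $\tau^{-1}$-fibre data, namely $\mathrm{Bl}\to\mathfrak{X}$, Poisson, and it agrees with the original bracket on the dense open complement of the exceptional divisor. The uniqueness in Polishchuk's criterion, or a direct comparison of brackets on the Rees algebras, then identifies it with the structure from part (i).
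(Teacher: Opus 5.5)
\textbf{Parts (i) and (ii), and where the gap is.} Your parts (i) and (ii) are the paper's argument. An ideal generated by Casimirs is Poisson and strongly coisotropic, and the generators $f\otimes 1-1\otimes f$ of the ideal of $\Gamma$ are Casimirs of $\mathcal{A}\otimes_{\mathbb{C}}\mathcal{Z}(\mathcal{A})$.

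The genuine gap is the scheme isomorphism in (iii), exactly at the step you flagged. In degree $n$, the kernel of $A\otimes_Z\mathfrak{m}_b^n\to I_b^n$ is $\mathrm{Tor}_1^Z(A,Z/\mathfrak{m}_b^n)$. This vanishes if $\tau$ is flat near $b$. In general it need not be irrelevant, and then the claimed isomorphism is false.

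\textbf{Counterexample.} Let $\mathcal{A}=\mathbb{C}[x,z,w,p]$ with Poisson bivector $x(x\partial_x-z\partial_z-w\partial_w)\wedge\partial_p$, a wedge of two commuting vector fields. Its Poisson centre is $\mathbb{C}[xz,xw]$, so $\mathfrak{B}=\mathbb{A}^2$ and $\tau(x,z,w,p)=(xz,xw)$, which is not flat over $0$. Take coordinates $u,v$ on $\mathfrak{B}$. The ideal of $\Gamma$ is generated by the regular sequence $xz-u,\,xw-v$, so $\widetilde{\mathfrak{X}\times\mathfrak{B}}=\{(xz-u)T_2=(xw-v)T_1\}\subseteq\mathbb{A}^6\times\mathbb{P}^1$. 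Hence
\[\pi_{\mathfrak{B}}^{-1}(0)=\{x(zT_2-wT_1)=0\}\subseteq\mathbb{A}^4\times\mathbb{P}^1,\]
which has the extra component $\{x=0\}\times\mathbb{P}^1$. The class of $zT_2-wT_1$ lies in your kernel: it is killed by $x$ but by no power of $(T_1,T_2)$. On the other hand, $\mathrm{Bl}_{\tau^{-1}(0)}(\mathfrak{X})=\mathrm{Bl}_{(xz,xw)}(\mathbb{A}^4)=\mathrm{Bl}_{(z,w)}(\mathbb{A}^4)=\{zT_2=wT_1\}$ is irreducible. In general one only gets that $\mathrm{Bl}_{\tau^{-1}(b)}(\mathfrak{X})$ is the strict transform of $\mathfrak{X}\times\{b\}$, a closed Poisson subscheme of $\pi_{\mathfrak{B}}^{-1}(b)$.

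\textbf{The paper's proof and a repair.} The paper's proof does not supply the missing idea either. It identifies the full preimage $\pi^{-1}(\theta_{\mathfrak{B}}^{-1}(b))$ with $\mathrm{Bl}_{\Gamma\cap\theta_{\mathfrak{B}}^{-1}(b)}(\theta_{\mathfrak{B}}^{-1}(b))$ by citation. The latter is the strict transform, and the example shows the two can differ.

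A correct statement needs extra hypotheses, for instance $\mathfrak{B}$ smooth at $b$ and $\tau$ flat over a neighbourhood of $b$. Both hold for the adjoint quotient by Kostant's results. There $f_i-\alpha_i$ is a regular sequence, so both Rees algebras have the same determinantal presentation, compatible with specialization.

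Under those hypotheses your factorization through the diagonal does work, but two of your identifications need a different justification:
\[\textstyle\bigoplus_nJ^n=A\otimes_Z\bigoplus_n\Delta^n\qquad\text{and}\qquad\Delta^n\otimes_Z\mathbb{C}_b\cong\mathfrak{m}_b^n.\]
Justify them by local freeness of $(Z\otimes Z)/\Delta^n$ over either factor; this is a module of principal parts, locally free because $Z$ is smooth. Freeness of $Z\otimes Z$ alone only gives surjectivity. Smoothness is really needed: for $\mathcal{A}=\mathbb{C}[s,t]/(st)$ with zero bracket, $\tau=\mathrm{id}$ is flat, yet the fibre over the node contains a projective line, while the blow-up of the node at the node is affine.

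\textbf{The Poisson part of (iii).} Once the isomorphism is available, your Poisson argument is a valid alternative to the paper's. You note that the fibre is cut out by pulled-back Casimirs, and that uniqueness holds because the complement of the Cartier exceptional divisor is schematically dense. The paper instead observes that the Rees-algebra surjection $\psi$ is Poisson on the charts $D_{+}(f)$.
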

	
	This result may be summarized informally as follows: the Poisson blow-ups $\mathrm{Bl}_{\tau^{-1}(b)}$ form a family $$\pi_{\mathfrak{B}}:\widetilde{X\times\mathfrak{B}}\longrightarrow\mathfrak{B},$$ where $\widetilde{\mathfrak{X}\times\mathfrak{B}}$ is Poisson and contains the blow-ups $\mathrm{Bl}_{\tau^{-1}(b)}$ as Poisson subschemes.
	
	We now specialize to the case $\mathfrak{X}=\mathfrak{g}^*=\mathfrak{g}$, where $\g$ is a finite-dimensional complex semisimple Lie algebra, and the Killing form is used to identify $\g^*$ with $\g$. The integrable system $\tau:\mathfrak{X}\longrightarrow\mathfrak{B}$ becomes the adjoint quotient $\tau:\g\longrightarrow\g\sll{}{G}\eqqcolon\mathfrak{c}$. Its fibers are precisely the closures of regular adjoint orbits in $\g$. One such fiber is the nilpotent cone $\mathcal{N}\subseteq\g$; it features prominently in what follows.
	
	Using Main Theorem \ref{Theorem: Main Theorem 1}, we obtain the Poisson blow-ups $\widetilde{\g\times\mathfrak{c}}\longrightarrow\mathfrak{g}\times\mathfrak{c}$ and $\mathrm{Bl}_{\tau^{-1}(c)}(\g)\longrightarrow\g$ for $c\in\mathfrak{c}$. We also have the morphisms $\pi_{\g}:\widetilde{\g\times\c}\longrightarrow\g$ and $\pi_{\mathfrak{c}}:\widetilde{\g\times\mathfrak{c}}\longrightarrow\mathfrak{c}$, obtained by composing $\widetilde{\g\times\mathfrak{c}}\longrightarrow\mathfrak{g}\times\mathfrak{c}$ with the projections $\g\times\mathfrak{c}\longrightarrow\g$ and $\g\times\mathfrak{c}\longrightarrow\c$, respectively.
	
	\begin{mtheorem}\label{Theorem: Main Theorem 2}
		The following statements are true.
		\begin{itemize}
			\item[\textup{(i)}] The morphism $\pi_{\mathfrak{c}}:\widetilde{\g\times\mathfrak{c}}\longrightarrow\mathfrak{c}$ is flat.
			\item[\textup{(ii)}] There are algebraic $\mathbb{C}^{\times}$-actions on $\widetilde{\g\times\mathfrak{c}}$ and $\mathfrak{c}$ that make $\pi_{\mathfrak{c}}$ equivariant, contract $\mathfrak{c}$ to the point $\tau(\mathcal{N})\in\mathfrak{c}$, and contract $\widetilde{\g\times\mathfrak{c}}$ to $\mathrm{Bl}_{\mathcal{N}}(\g)$.
			\item[\textup{(iii)}] The $G$-action on $\g\times\mathfrak{c}$ has a unique lift to a Poisson Hamiltonian $G$-variety structure on $\widetilde{\g\times\mathfrak{c}}$, where $G$ acts on $\g\times\mathfrak{c}$ by the adjoint action on the first factor. The corresponding moment map is $\pi_{\g}:\widetilde{\g\times\mathfrak{c}}\longrightarrow\g$.
		\end{itemize}
	\end{mtheorem}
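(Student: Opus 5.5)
The plan is to work with the Rees algebra description. Let $f_1,\dots,f_r\in\mathbb{C}[\g]^G$ be homogeneous basic invariants of degrees $d_1,\dots,d_r$. Identify $\c\cong\mathbb{C}^r$ via $\tau=(f_1,\dots,f_r)$, with coordinates $c_1,\dots,c_r$. Then $\Gamma\subseteq\g\times\c$ is cut out by the ideal $I=(f_1-c_1,\dots,f_r-c_r)$. By Kostant's theorem, $f_1,\dots,f_r$ form a regular sequence in $\mathbb{C}[\g]$, and every fiber $\tau^{-1}(c)$ is a complete intersection. It follows that $g_i\coloneqq f_i-c_i$ form a regular sequence in $A\coloneqq\mathbb{C}[\g]\otimes\mathbb{C}[\c]$. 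Hence $\widetilde{\g\times\c}=\mathrm{Proj}\,A[It]$, and the Rees algebra is the symmetric algebra modulo the Koszul relations:
$$A[It]\cong A[T_1,\dots,T_r]/(g_iT_j-g_jT_i).$$
This gives the charts $U_j=\operatorname{Spec}A[T_i/T_j]/(g_i-g_j T_i/T_j)$.

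For (i), I would check flatness over $\mathbb{C}[\c]$ chart by chart. On $U_j$, substitute $c_i=f_i-g_jT_i/T_j$ for $i\neq j$. This shows $U_j\cong\operatorname{Spec}\mathbb{C}[\g][c_j,s_i : i\neq j]$ with $s_i=T_i/T_j$, so $U_j$ is smooth of dimension $\dim\g+r$. The map to $\c$ is given by $c_j$ and $c_i=f_i-(f_j-c_j)s_i$. Because the source is smooth, hence Cohen–Macaulay, and the target $\c$ is smooth, miracle flatness reduces (i) to showing all fibers have dimension $\dim\g$. Over $c$, the fiber is $\mathrm{Bl}_{\tau^{-1}(c)}(\g)$ by Main Theorem \ref{Theorem: Main Theorem 1}(iii). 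That blow-up is birational to $\g$ away from the exceptional divisor. The exceptional divisor is $\mathbb{P}(N)$, a $\mathbb{P}^{r-1}$-bundle over the complete intersection $\tau^{-1}(c)$ of dimension $\dim\g-r$, so it has dimension $\dim\g-1$. The one point I must verify is that the scheme-theoretic fiber in each chart is the blow-up and has no extra components of larger dimension. I would check this directly: on $U_j$ the fiber over $c$ is cut out by $r$ equations in $\dim\g+r$ affine variables, so every component has dimension at least $\dim\g$. Pure dimension $\dim\g$ then follows by comparing with the blow-up description. I expect this to be the main obstacle.

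For (ii), let $\mathbb{C}^\times$ act on $\g$ by $z\cdot x=zx$, on $\c$ by $z\cdot c_i=z^{d_i}c_i$, and on $T_i$ by weight $d_i$. Then each $g_i$ is homogeneous of weight $d_i$, $I$ is stable, and the action lifts to the Rees algebra, hence to $\widetilde{\g\times\c}$. Equivariance of $\pi_\c$ is immediate. On $\c$ all weights are positive, so $\lim_{z\to0}z\cdot c=0=\tau(\mathcal{N})$. On the charts $U_j\cong\operatorname{Spec}\mathbb{C}[\g][c_j,s_i]$, the weights are $1$ on $\g$, $d_j$ on $c_j$, and $d_i-d_j$ on $s_i$. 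Some weights may be negative, so I read "contract to $\mathrm{Bl}_\mathcal{N}(\g)$" as follows: limits exist, because $\pi$ is projective over $\g\times\c$ and the limits exist downstairs, and they lie in $\pi_\c^{-1}(0)\cong\mathrm{Bl}_\mathcal{N}(\g)$, which is a closed invariant subvariety. Existence of limits uses the valuative criterion of properness.

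For (iii), note that $G$ fixes each $g_i$. Hence $G$ acts on $A[It]$ with $T_i$ invariant, giving a lift, and uniqueness follows from birationality. The Poisson structure on $A$ is the Lie–Poisson bracket on $\g$, with $\c$ carrying the zero bracket. The map $\pi_\g$ is the composite $\widetilde{\g\times\c}\to\g\times\c\to\g$ of Poisson morphisms (Main Theorem \ref{Theorem: Main Theorem 1}(ii)). The moment map condition says that for $\xi\in\g$, the Hamiltonian vector field of the linear function $\pi_\g^*\xi$ generates the infinitesimal action. Both are derivations of the function sheaf, and they agree on the dense open complement of the exceptional divisor, which is isomorphic to an open subset of $\g\times\c$. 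They therefore agree everywhere, since the blow-up is an integral variety. $G$-invariance of the Poisson bivector follows by the same density argument.
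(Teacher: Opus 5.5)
Your proof is correct. For (i) and (iii) it follows the paper. In (i), that means miracle flatness from smoothness of $\widetilde{\mathfrak{g}\times\mathfrak{c}}$ and $\mathfrak{c}$, plus $\pi_{\mathfrak{c}}^{-1}(c)\cong\mathrm{Bl}_{\tau^{-1}(c)}(\mathfrak{g})$. In (iii), it means a $G$-action fixing the generators of the ideal of $\Gamma$, with the Hamiltonian properties carried by density from the complement of the exceptional divisor. You additionally justify uniqueness of the lift, which the paper does not address.

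Your chart computation $U_j\cong\operatorname{Spec}\mathbb{C}[\mathfrak{g}][c_j,s_i:i\neq j]$ replaces the paper's appeal to ``smooth blown up along smooth''. It also settles the step you call the main obstacle. Over $\gamma\in\mathfrak{c}$, the fiber in $U_j$ is the open set $\{f_j\neq\gamma_j\}\subseteq\mathfrak{g}$ together with the closed set $\tau^{-1}(\gamma)\times\mathbb{A}^{r-1}$, which has dimension $\dim\mathfrak{g}-1$. So the fiber is pure of dimension $\dim\mathfrak{g}$, as the scheme isomorphism with the integral variety $\mathrm{Bl}_{\tau^{-1}(\gamma)}(\mathfrak{g})$ already forces. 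Likewise, regularity of $(g_1,\ldots,g_r)$ in $A$ needs nothing from Kostant, since $A/(g_1,\ldots,g_k)\cong\mathbb{C}[\mathfrak{g}][c_{k+1},\ldots,c_r]$ is a domain.

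The genuine difference is in (ii). The paper defines the action on $\mathfrak{c}$ through a principal $\mathfrak{sl}_2$-triple and the Kostant slice. In coordinates this is $z_i\mapsto t^{2d_i}z_i$, i.e.\ your action composed with $t\mapsto t^2$ (your $r$ is the paper's $\ell$). The paper then lifts it to $\overline{\mathrm{gr}(\phi)}\subseteq\mathfrak{g}\times\mathbb{C}^{\ell}\times\mathbb{P}^{\ell-1}$ by letting $\mathbb{C}^{\times}$ act trivially on $\mathbb{P}^{\ell-1}$.

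Your lift gives $T_i$ weight $d_i$; this is the action induced functorially on the Rees algebra of the stable ideal $I$, and it is the one that actually works. Since $f_i(t^2x)-t^{2d_i}z_i=t^{2d_i}(f_i(x)-z_i)$, the trivial action on $\mathbb{P}^{\ell-1}$ does not preserve $\mathrm{gr}(\phi)$ once the degrees $d_i$ are not all equal (e.g.\ for $\mathfrak{sl}_3$). One must instead use $[v]\mapsto[t^{2d_1}v_1:\cdots:t^{2d_{\ell}}v_{\ell}]$, which is your weighting.

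You also supply what the paper leaves implicit in the word ``contract'': existence of $\lim_{t\to 0}$ in $\widetilde{\mathfrak{g}\times\mathfrak{c}}$, via properness of $\pi$ and the valuative criterion. The limits land in $\pi^{-1}(0,0)\subseteq\pi_{\mathfrak{c}}^{-1}(0)\cong\mathrm{Bl}_{\mathcal{N}}(\mathfrak{g})$. Your observation that the chart weights $d_i-d_j$ can be negative is exactly why this properness argument is needed, rather than an appeal to positive weights.
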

	
	We also obtain results on the blow-ups of $\g$ along specific regular adjoint orbit closures.
	
	\subsection{Organization} Each section begins with a summary of its contents. Main Theorem \ref{Theorem: Main Theorem 1} is proved as Theorem~\ref{Theorem:IntegrableSystem} in the main text. Main Theorem \ref{Theorem: Main Theorem 2} is proved as Theorem \ref{Theorem: Main theorem Lie algebra} and Propositions \ref{Proposition: Actions} and \ref{Proposition: Choice}.
	
	\subsection*{Acknowledgements} We thank Tom Gannon, Sam Gunningham, and Valerio Toledano Laredo for illuminating discussions. P.C. was supported by the National Science Foundation Grant No. DMS-2454103 and Simons Foundation Grant No. MPS-TSM-00002292. I.H. was supported by the National Science Foundation Grant No. DMS-2302664 and Simons Foundation Grant No. MPS-TSM-00026099. This material is based upon work supported by the National Science Foundation under Grant No. DMS-2424139, while I.H. was in residence at the Simons Laufer Mathematical Sciences Institute in Berkeley, California, during the Fall 2026 semester.

	\section{Poisson-geometric results}\label{Section: Poisson-geometric results}
	This section develops the main necessary results in algebraic Poisson geometry. In Section \ref{Subsection: Preliminaries}, we collect some straightforward facts about the regular locus and Poisson rank of a potentially singular Poisson variety. Section \ref{Subsection: Polishchuk} then recalls some results of Polishchuk on Poisson blow-ups, and establishes facts about their regular loci and Poisson ranks. In Section \ref{Subsection: A canonical integrable system}, we associate a canonical integrable system $\tau:\mathfrak{X}\longrightarrow\mathfrak{B}$ to an affine Poisson scheme $\mathfrak{X}$. This leads to Section \ref{Subsection: A family}, where we consider the Poisson blow-ups of $\mathfrak{X}$ along the fibers of $\tau$. We show that these blow-ups form a family $\widetilde{\mathfrak{X}\times\mathfrak{B}}\longrightarrow\mathfrak{B}$ possessing several Poisson-geometric properties.
	
	\subsection{Preliminaries on Poisson varieties and schemes}\label{Subsection: Preliminaries} We begin by setting conventions on symplectic leaves in algebraic Poisson varieties. A first step is to recall the essentials of symplectic leaves in holomorphic Poisson geometry. To this end, let $X$ be a complex manifold with structure sheaf $\mathcal{O}_X$. One calls $X$ a \textit{holomorphic Poisson manifold} if $\mathcal{O}_X$ has been enriched to a sheaf of Poisson algebras. In this case, there exists a unique holomorphic bivector field $\sigma\in\mathrm{H}^0(X,\bigwedge^2(TX))$ satisfying $\{f,g\}=\sigma(\mathrm{d}f,\mathrm{d}g)$ for all $f,g\in\mathcal{O}_X$. The image of the vector bundle morphism 
	$$\sigma^{\vee}:T^*X\longrightarrow TX,\quad (x,\phi)\mapsto\sigma_x(\phi,\cdot)\in T_xX$$
	is a (potentially singular) integrable holomorphic distribution on $X$. At the same time, $\sigma$ is tangent to each integral leaf $L\subseteq X$ of this distribution. The resulting holomorphic Poisson structure on $L$ turns out to be symplectic: there exists a unique holomorphic symplectic form $\omega_L\in\mathrm{H}^0(L,\bigwedge^2(T^*L))$ satisfying $(\omega_L^{\vee})^{-1}=-\sigma_L^{\vee}$, where $\sigma_L$ is the Poisson bivector field on $L$ and $\omega_L^{\vee}$ is defined by
	$$\omega_L^{\vee}:TL\longrightarrow T^*L,\quad (x,v)\mapsto(\omega_L)_x(v,\cdot)\in T_x^*L.$$ The holomorphic symplectic manifolds $(L,\omega_L)$ are called the \textit{symplectic leaves} of $X$.
	
	A \textit{Poisson scheme} is a scheme over $\mathbb{C}$ whose structure sheaf has been enriched to one of Poisson algebras. Assume that $X$ is a Poisson variety, by which we mean that $X$ is a reduced Poisson scheme of finite type. Set $X_0\coloneqq X$, and let $X_{i+1}$ denote the singular locus $(X_i)_{\text{sing}}$ of $X_i$ for all $i\in\mathbb{Z}_{\geq 0}$. The resulting chain $X_0\supseteq X_1\supseteq\cdots$ of closed subvarieties yields $\emptyset$ after finitely many steps. Let $n\in\mathbb{Z}_{\geq 0}$ be such that $X_n=\emptyset$ and $X_i\neq\emptyset$ for $i<n$. It follows that $$X=X_0\supseteq X_1\supseteq\cdots\supseteq X_n=\emptyset$$ is a strictly decreasing filtration of $X$ by closed subvarieties. One also knows that $X_i$ is a Poisson subvariety of $X$ for all $i\in\{0,\ldots,n\}$ \cite[Corollary 2.4]{pol:97}. Setting $X_i^{\circ}\coloneqq X_i\setminus (X_i)_{\text{sing}}$ for all $i\in\{1,\ldots,n\}$, we observe that
	\begin{equation}\label{Equation: Smooth decomposition}X=\bigcup_{i=0}^nX_i^{\circ}\end{equation} expresses $X$ as a disjoint union of smooth, locally closed, Poisson subvarieties. Each subvariety $X_i^{\circ}\subseteq X$ may be regarded as a holomorphic Poisson manifold. As such, it decomposes into symplectic leaves. A subset $L\subseteq X$ is called a \textit{symplectic leaf} if $L$ is a symplectic leaf of $X_i^{\circ}$ for some $i\in\{0,\ldots,n\}$. This implies that each $x\in X$ is contained in a unique symplectic leaf, to be denoted $L_x\subseteq X$. The \textit{Poisson rank} $\mathrm{Prk}(X)$ of $X$ is the maximum symplectic leaf dimension occurring in $X$. One may consider the \textit{regular locus}
	$$X_{\text{reg}}\coloneqq\{x\in X:\dim(L_x)=\mathrm{Prk}(X)\}.$$  
	
	The following is well-known if $X$ is smooth, and perhaps known to experts in its stated generality.
	
	\begin{proposition}\label{Proposition: Open}
		If $X$ is a Poisson variety, then $X_{\emph{reg}}$ is open in $X$.
	\end{proposition}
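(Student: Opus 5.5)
The plan is to express the leaf dimension $\dim(L_x)$ as the rank at $x$ of a matrix of regular functions. Openness of $X_{\text{reg}}$ will then follow because the condition ``rank $\geq k$'' is the non-vanishing of some minor.

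\textbf{Step 1: a pointwise skew form.} For a closed point $x\in X$ with maximal ideal $\mathfrak{m}_x\subseteq\mathcal{O}_{X,x}$, I will define
$$B_x:\mathfrak{m}_x/\mathfrak{m}_x^2\times\mathfrak{m}_x/\mathfrak{m}_x^2\longrightarrow\mathbb{C},\qquad B_x(f,g)\coloneqq\{f,g\}(x).$$
This is well defined by the Leibniz rule: if $f=hk$ with $h,k\in\mathfrak{m}_x$, then $\{hk,g\}=h\{k,g\}+k\{h,g\}$ vanishes at $x$. Let $r(x)\coloneqq\mathrm{rank}(B_x)$.

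\textbf{Step 2: $r(x)=\dim(L_x)$.} Suppose $x\in X_i^{\circ}$. Since $X_i\subseteq X$ is a Poisson subvariety \cite[Corollary 2.4]{pol:97}, its ideal is closed under brackets with $\mathcal{O}_X$, and the bracket on $\mathcal{O}_{X_i}$ is the induced one. The restriction map $\mathfrak{m}_{x,X}/\mathfrak{m}_{x,X}^2\to\mathfrak{m}_{x,X_i}/\mathfrak{m}_{x,X_i}^2$ is surjective, and $B_x$ on $X$ is the pullback of the corresponding form on $X_i$. A pullback of a bilinear form under a surjection has the same rank. Hence $r(x)$ may be computed on $X_i$, or equivalently on the smooth Poisson variety $X_i^{\circ}$.

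There, $\mathfrak{m}_x/\mathfrak{m}_x^2=T_x^*X_i^{\circ}$ and $B_x=\sigma_x$. So $r(x)=\mathrm{rank}(\sigma_x^{\vee})$, which is the dimension of the characteristic distribution at $x$, i.e.\ $\dim(L_x)$. This is the step needing the most care: one must check that the leaf of $X_i^{\circ}$ used in the definition is the one computed by the bivector of $X_i^{\circ}$, and that the restriction really is compatible with brackets. Both facts come from $X_i$ being a Poisson subvariety.

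\textbf{Step 3: lower semicontinuity.} Openness is local, so I will work on an affine open $U\subseteq X$ with coordinate ring generated by $x_1,\ldots,x_N$. For $x\in U$, the classes of $x_j-x_j(x)$ span $\mathfrak{m}_x/\mathfrak{m}_x^2$. Since constants are Casimirs, $B_x$ is represented in this spanning set by the matrix $M(x)=(\{x_j,x_l\}(x))_{j,l}$. The entries of $M$ are regular functions on $U$, and the rank of a form equals the rank of its Gram matrix on any spanning set. Thus $\{x\in U:r(x)\geq k\}$ is the complement of the common zero locus of the $k\times k$ minors of $M$, and so is open.

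Because $\mathrm{Prk}(X)$ is the maximum of $r$, we get $X_{\text{reg}}=\{x:r(x)\geq\mathrm{Prk}(X)\}$, which is open by Step 3.
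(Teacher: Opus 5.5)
Your proof is correct and follows the same route as the paper: on an affine open with generators $x_1,\ldots,x_N$ you identify $\dim(L_x)$ with the rank of the matrix $(\{x_j,x_l\}(x))$, and then observe that $\{\mathrm{rank}\geq\mathrm{Prk}(X)\}$ is open and equals $X_{\text{reg}}$ because $\mathrm{Prk}(X)$ is the maximum rank. The only difference is that the paper cites Brown--Gordon for the identification of leaf dimension with this rank, whereas your Step 2 proves it directly: you pull the form back along the surjection to the Poisson stratum $X_i$, then compare it with the bivector on the smooth part $X_i^{\circ}$.
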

	
	\begin{proof}
		Choose an open cover $$X=\bigcup_{j=1}^k Y_j$$ of $X$ by affine Poisson subvarieties $Y_j=\mathrm{Spec}(A_j)$. It suffices to prove that $X_{\text{reg}}\cap Y_j$ is open in $Y_j$ for all $j\in\{1,\ldots,k\}$. To this end, suppose that $j\in\{1,\ldots,k\}$. Choose generators $z_1,\ldots,z_{\ell}$ of $A_j$. For $y\in Y_j$, the dimension of $L_y$ is the rank of the following matrix $M(y)\in\mathrm{Mat}_{\ell\times \ell}(\mathbb{C})$ \cite[Section 3 and Proposition 3.6]{bro-gor:03}: $$M(y)_{pq}\coloneqq[\{z_p,z_q\}]\in\mathcal{O}_{X,y}/{\mathfrak{m}_y}=\mathbb{C}$$ for all $p,q\in\{1,\ldots,\ell\}$, where $\mathcal{O}_{X,y}$ is the stalk of $\mathcal{O}_X$ at $y$ and $\mathfrak{m}_y\subseteq\mathcal{O}_{X,y}$ is the unique maximal ideal. This implies that $\mathrm{rank}(M(y))\leq\mathrm{Prk}(X)$ for all $y\in Y_j$, so that $$\{y\in Y_j:\mathrm{rank}(M(y))\geq\mathrm{Prk}(X)\}=\{y\in Y_j:\mathrm{rank}(M(y))=\mathrm{Prk}(X)\}.$$ The left-hand side is evidently open in $Y_j$, while the right-hand side is $X_{\text{reg}}\cap Y_j$.
	\end{proof}
	
	We continue with the notation introduced above, in which $X$ denotes a Poisson variety with structure sheaf $\mathcal{O}_X$. Given $f \in \mathcal{O}_X$, one defines the Hamiltonian vector field $\mathrm{H}(f)\coloneqq\{f,\cdot\} \in \mathrm{Der}(\mathcal{O}_X)$ on $X$. One has $$T_x(L_x)=\{\mathrm{H}(f)_x:f\in\mathcal{O}_{X,x}\}\subseteq T_xX$$ for each $x\in X$, where $\mathcal{O}_{X,x}$ denotes the stalk of $\mathcal{O}_X$ at $x$. As with Proposition \ref{Proposition: Open}, the following is well-known when $X$ is smooth; we include its proof for completeness.
	
	\begin{lemma}\label{Lemma: Inequality}
		Suppose that $X$ and $Y$ are Poisson varieties. Consider a Poisson variety morphism $\mu:X\longrightarrow Y$, a point $x\in X$, and the symplectic leaves $L_x\subseteq X$ and $L_{\mu(x)}\subseteq Y$. We then have $\dim(L_x)\geq\dim(L_{\mu(x)})$. If equality holds, then $T_x(L_x)=\{\mathrm{H}(\mu^*(f))_x:f\in\mathcal{O}_{Y,\mu(x)}\}$.
	\end{lemma}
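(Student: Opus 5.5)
The plan is to compare the two tangent spaces via the differential $\mathrm{d}\mu_x:T_xX\longrightarrow T_{\mu(x)}Y$, using the description $T_x(L_x)=\{\mathrm{H}(f)_x:f\in\mathcal{O}_{X,x}\}$ recalled just above the statement (and its analogue on $Y$). Throughout, a tangent vector at $x$ is a $\mathbb{C}$-linear derivation $\mathcal{O}_{X,x}\longrightarrow\mathbb{C}$ at $x$, and $\mathrm{d}\mu_x(v)=v\circ\mu^*$. Two constructions drive the argument: the subspace
$$W\coloneqq\{\mathrm{H}(\mu^*(f))_x:f\in\mathcal{O}_{Y,\mu(x)}\}\subseteq T_x(L_x),$$
and the push-forward of $W$ along $\mathrm{d}\mu_x$.

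The first step is to check that $\mathrm{d}\mu_x$ maps $W$ onto $T_{\mu(x)}(L_{\mu(x)})$. Take $g\in\mathcal{O}_{Y,\mu(x)}$. Because $\mu$ is Poisson,
$$\mathrm{H}(\mu^*f)_x(\mu^*g)=\big[\{\mu^*f,\mu^*g\}\big]=\big[\mu^*\{f,g\}\big]=\mathrm{H}(f)_{\mu(x)}(g).$$
Hence $\mathrm{d}\mu_x(\mathrm{H}(\mu^*f)_x)=\mathrm{H}(f)_{\mu(x)}$. Every element of $T_{\mu(x)}(L_{\mu(x)})$ has the form $\mathrm{H}(f)_{\mu(x)}$, so $\mathrm{d}\mu_x|_W$ is surjective onto it. Linear algebra then gives
$$\dim(L_x)=\dim T_x(L_x)\geq\dim W\geq\dim T_{\mu(x)}(L_{\mu(x)})=\dim(L_{\mu(x)}),$$
which is the inequality.

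For the equality clause, suppose $\dim(L_x)=\dim(L_{\mu(x)})$. Then both inequalities in the chain above are equalities. In particular $\dim W=\dim T_x(L_x)$, and since $W\subseteq T_x(L_x)$ this forces $W=T_x(L_x)$, which is exactly the claimed description. As a byproduct, $\mathrm{d}\mu_x$ restricts to an isomorphism from $T_x(L_x)$ onto $T_{\mu(x)}(L_{\mu(x)})$.

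The main obstacle is making sure this pointwise description is legitimate when $X$ and $Y$ are singular. There are two things to check. First, that $\dim(L_x)$ coincides with the dimension of the span of Hamiltonian vectors at $x$ inside the Zariski tangent space. This follows from the stated identity $T_x(L_x)=\{\mathrm{H}(f)_x\}$, or equivalently from the rank of the matrix $M(x)$ used in the proof of Proposition~\ref{Proposition: Open}. Second, that the chain rule $\mathrm{d}\mu_x(v)=v\circ\mu^*$ on Zariski tangent spaces is compatible with evaluating Hamiltonian derivations on germs; this is immediate from working with local rings and residue fields. Once the identity for $T_x(L_x)$ is accepted as stated, both points reduce to the computation in the first step.
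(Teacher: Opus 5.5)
Your proof is correct and follows essentially the same route as the paper. Both arguments rest on the identity $\mathrm{d}\mu_x(\mathrm{H}(\mu^*f)_x)=\mathrm{H}(f)_{\mu(x)}$, which holds because $\mu$ is Poisson; the paper lifts a basis of $T_{\mu(x)}(L_{\mu(x)})$ to a linearly independent subset of $T_x(L_x)$, whereas you use surjectivity of $\mathrm{d}\mu_x$ on $W$ and count dimensions, and the equality clause then follows identically in substance because $W$ must exhaust $T_x(L_x)$.
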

	
	\begin{proof}
		Choose germs $f_1,\ldots,f_n\in\mathcal{O}_{Y,\mu(x)}$ such that $\{\mathrm{H}(f_i)_{\mu(x)}\}_{i=1}^n$ is a basis of $T_{\mu(x)}(L_{\mu(x)})$. Let us also consider the germs $\mu^*(f_1),\ldots,\mu^*(f_n)\in\mathcal{O}_{X,x}$. We have $$\mathrm{d}\mu_x(\mathrm{H}(\mu^*(f_i))_x)=\mathrm{H}(f_i)_{\mu(x)}\quad\text{and}\quad \mathrm{H}(\mu^*(f_i))_x\in T_x(L_{x})$$ for all $i\in\{1,\ldots,n\}$. It follows that $\{\mathrm{H}(\mu^*(f_i))_x\}_{i=1}^n$ is a linearly independent subset of $T_x(L_x)$. This implies that $\dim(T_x(L_x))\geq n=\dim(T_{\mu(x)}(L_{\mu(x)}))$, i.e. $\dim(L_x)\geq\dim(L_{\mu(x)})$. 
		
		If $\dim(T_x(L_x))=\dim(T_{\mu(x)}(L_{\mu(x)}))$, then $\{\mathrm{H}(\mu^*(f_i))_x\}_{i=1}^n$ is a basis of $T_x(L_x)$. It follows that $T_x(L_x)=\{\mathrm{H}(\mu^*(f))_x:f\in\mathcal{O}_{Y,\mu(x)}\}$ in this case. 
	\end{proof}
	
	\subsection{Poisson blow-ups and Polishchuk's criterion}\label{Subsection: Polishchuk}
	We now recall the blow-up of a complex scheme $X$ along a closed subscheme $Y \subseteq X$. Write $\mathcal{I}_Y\subseteq\mathcal{O}_X$ for the ideal sheaf of $Y$, and $\underline{\mathbb{C}}$ (resp. $\underline{\mathbb{C}[t]}$) for the constant sheaf on $X$ with value $\mathbb{C}$ (resp. $\mathbb{C}[t]$). Note that the \textit{Rees algebra} $$\mathrm{Rees}_{\mathcal{I}_Y}(\mathcal{O}_X)\coloneqq\bigoplus_{j=0}^{\infty}(\mathcal{I}_Y)^jt^j\subseteq\mathcal{O}_X\otimes_{\underline{\mathbb{C}}}\underline{\mathbb{C}[t]}$$ is a sheaf of $\mathbb{Z}_{\geq 0}$--graded commutative $\mathcal{O}_X$-algebras. The \textit{blow-up} of $X$ along $Y$ is the global Proj of this sheaf, namely
	$$\mathrm{Bl}_Y(X)\coloneqq\mathrm{Proj}(\mathrm{Rees}_{\mathcal{I}_Y}(\mathcal{O}_X))\overset{\pi}\longrightarrow X.$$ The inverse image of $Y$ is a divisor called the \textit{exceptional divisor},  $\mathcal{E}_Y(X)\coloneqq\pi^{-1}(Y)\subseteq\mathrm{Bl}_Y(X)$, and the restriction of $\pi$ to $\mathrm{Bl}_Y(X)\setminus\mathcal{E}_Y(X)$ is an isomorphism of schemes, \begin{equation}\label{Equation: Isomorphism off divisor}\mathrm{Bl}_Y(X)\setminus\mathcal{E}_Y(X)\overset{\cong}\longrightarrow X\setminus Y.\end{equation}
	Note that if $X$ and $Y$ are smooth, then so are $\mathrm{Bl}_Y(X)$ and $\mathcal{E}_Y(X)$.
	
	Polishchuk studies the blow-up construction in Poisson geometry \cite{pol:97}. In more detail, consider a Poisson scheme $X$ and closed subscheme $Y\subseteq X$. A natural question is whether $\mathrm{Bl}_Y(X)$ carries a Poisson structure for which the blow-up morphism $\pi:\mathrm{Bl}_Y(X)\longrightarrow X$ is Poisson; it is clearly unique if it exists. To refine this question, equip $\mathrm{Bl}_Y(X)\setminus\mathcal{E}_Y(X)$ with the unique Poisson structure for which \eqref{Equation: Isomorphism off divisor} is Poisson. The question is now whether the Poisson structure on $\mathrm{Bl}_Y(X)\setminus\mathcal{E}_Y(X)$ extends across the exceptional divisor. This need not occur, as can be witnessed by letting $X$ be a smooth two-dimensional Poisson variety and setting $Y=\{x\}$ for a point $x\in X$. The Poisson structure on $\mathrm{Bl}_{x}(X)$ extends across the exceptional divisor if and only if $\sigma(x)=0$, where $\sigma$ is the Poisson bivector field on $X$ \cite[Proposition 2.19]{pym:18}. This example suggests requiring $Y$ to be a closed Poisson subscheme of $X$, i.e. that the ideal sheaf $\mathcal{I}_Y\subseteq\mathcal{O}_X$ of $Y$ should be a sheaf of Poisson ideals.
	
	Suppose that $\mathcal{I}\subseteq\mathcal{O}_X$ is a Poisson ideal sheaf. Polishchuk calls $\mathcal{I}\subseteq\mathcal{O}_X$ \textit{degenerate} if $$\{f,g\}h+\{g,h\}f+\{h,f\}g\in\mathcal{I}^3$$ for all $f,g,h\in\mathcal{I}$. Let us call $\mathcal{I}$ \textit{strongly coisotropic} if $\{\mathcal{I},\mathcal{I}\}\subseteq\mathcal{I}^2$. This condition is clearly stronger than $\mathcal{I}$ being degenerate. With this in mind, we summarize \cite[Theorem 8.2 and Proposition 8.3]{pol:97} as follows.    
	
	\begin{theorem}[Polishchuk]\label{Theorem: Polishchuk}
		Consider a Poisson scheme $X$ and closed Poisson subscheme $Y\subseteq X$. 
		\begin{itemize}
			\item[\textup{(i)}] If $\mathcal{I}_Y$ is degenerate, then there exists a unique Poisson structure on $\mathrm{Bl}_Y(X)$ for which the blow-up morphism $\pi:\mathrm{Bl}_Y(X)\longrightarrow X$ is Poisson.
			\item[\textup{(ii)}] If $\mathcal{I}_Y$ is strongly coisotropic, then $\mathcal{E}_{Y}(X)$ is a Poisson subscheme of $\mathrm{Bl}_Y(X)$.
			\item[\textup{(iii)}] If $X$ and $Y$ are smooth and $\mathrm{Bl}_Y(X)$ carries a Poisson structure for which $\pi:\mathrm{Bl}_Y(X)\longrightarrow X$ is Poisson, then $\mathcal{I}_Y$ is degenerate. If one also assumes that $\mathcal{E}_Y(X)$ is a Poisson subscheme of $\mathrm{Bl}_Y(X)$, then $\mathcal{I}_Y$ is strongly coisotropic.
		\end{itemize}
	\end{theorem}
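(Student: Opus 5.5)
This statement is a summary of results of Polishchuk \cite[Theorem 8.2 and Proposition 8.3]{pol:97}. Accordingly, the plan is to recall the mechanism behind each part rather than to produce a new argument.

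\emph{Part (i).} The question is local, so we may assume $X=\mathrm{Spec}(A)$ is affine and $\mathcal{I}_Y=I$ is a Poisson ideal. The blow-up is covered by the affine charts $\mathrm{Spec}(A[I/f])$, for $f\in I$, where $A[I/f]$ is the degree-zero part of the localized Rees algebra. An element of $A[I/f]$ has the form $g/f^k$ with $g\in I^k$. The Leibniz rule determines the bracket uniquely on the fraction field, or on the open set off the exceptional divisor, so uniqueness is automatic. For existence, we must show that brackets of such fractions lie back in $A[I/f]$. Expanding gives
\[
\{g/f^k,h/f^l\}=\frac{\{g,h\}}{f^{k+l}}-\frac{l\,h\{g,f\}}{f^{k+l+1}}-\frac{k\,g\{f,h\}}{f^{k+l+1}}.
\]
Since $I$ is Poisson, $\{I^k,I^l\}\subseteq I^{k+l-1}$, and each term has numerator degree one short of its denominator. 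The degeneracy condition, applied to $f,g,h$ with $g,h\in I$, is exactly what upgrades the combination to lie in $I^{k+l+1}$ after clearing an extra factor of $f$. One checks this on generators of degree one, $k=l=1$, and then extends by Leibniz. The Jacobi identity holds because it holds on the dense open set, or generically on each integral component. To avoid reducedness issues, one argues instead that the bracket is the restriction of the Poisson structure on $A[f^{-1}]$, which contains $A[I/f]$ when $f$ is a nonzerodivisor in the Rees setting.

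\emph{Part (ii).} On the chart $\mathrm{Spec}(A[I/f])$, the exceptional divisor is cut out by $f$. It therefore suffices to show $\{f,A[I/f]\}\subseteq fA[I/f]$. For $g\in I^k$, the condition $\{I,I\}\subseteq I^2$ gives $\{f,g\}\in I^{k+1}$. Hence $\{f,g/f^k\}=\{f,g\}/f^k-k\,g\{f,f\}/f^{k+1}$, and the second term vanishes, so $\{f,g/f^k\}=f\cdot(\{f,g\}/f^{k+1})\in fA[I/f]$.

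\emph{Part (iii).} This is the converse direction and the main obstacle. In the smooth setting, choose local coordinates $y_1,\dots,y_r,z$ with $I=(y_1,\dots,y_r)$. Work in the chart with coordinates $y_1$ and $u_i=y_i/y_1$. Regularity of $\{u_i,u_j\}$ along $y_1=0$, computed from the expansion above, forces the quadratic part of $\{y_i,y_j\}$ modulo $I^3$ to satisfy the cyclic degeneracy condition. In addition, tangency of the bracket to $\{y_1=0\}$ forces $\{y_1,u_i\}\in(y_1)$, which yields $\{y_1,y_i\}\in I^2$, i.e. strong coisotropy. I expect the bookkeeping of orders of vanishing in (iii) to be the delicate part. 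Following the paper, I would cite Polishchuk for it rather than redo it.
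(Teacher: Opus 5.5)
Your proposal is correct and takes the paper's approach: the paper also gives no independent proof and instead cites Polishchuk \cite[Theorem 8.2 and Proposition 8.3]{pol:97}, and Remark~\ref{Remark: Affine open} describes the same chart-wise mechanism you sketch (Poisson brackets on the degree-zero localizations $R_{(f)}$ over $D_{+}(f)$, compatible on overlaps). Your computations for (i) and (ii) are sound, and deferring the converse (iii) to Polishchuk matches the paper; one small remark is that the embedding $A[I/f]\hookrightarrow A_f$ holds with no nonzerodivisor hypothesis, because $f$ is automatically a nonzerodivisor on each chart.
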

	
	\begin{definition}\label{Definition: Polishchuk}
		We say that a pair $(X,Y)$ of a Poisson scheme $X$ and closed subscheme $Y \subseteq X$ satifies \textit{Polishchuk's criterion} if the ideal sheaf $\mathcal{I}_Y$ of $Y$ is Poisson and strongly coisotropic. In particular, the conclusions in Theorem~\ref{Theorem: Polishchuk}(i),(ii) follow.
		
	\end{definition}
	\begin{remark}\label{Remark: Affine open}
		We now discuss the proof of Theorem~\ref{Theorem: Polishchuk}(i). A first step is to regard elements of $\mathcal{I}_Y\setminus\{0\}$ as degree-one elements in $R\coloneqq\mathrm{Rees}_{\mathcal{I}_Y}(\mathcal{O}_X)$. Each $f\in\mathcal{I}_Y\setminus\{0\}$ thereby determines the affine open subset $D_{+}(f)\subseteq\mathrm{Bl}_Y(X)$. One finds that
		$$\mathrm{Bl}_Y(X)=\bigcup_{f\in\mathcal{I}_Y\setminus\{0\}}D_{+}(f).$$ On the other hand, consider the localization $R_f$ of $R$ by the powers of $f\in\mathcal{I}_Y\setminus\{0\}$; it is a graded $\mathbb{C}$-algebra. Write $R_{(f)}\subseteq R_f$ for the graded component in degree zero, noting that $D_{+}(f)=\mathrm{Spec}(R_{(f)})$. Polishchuk proves that $R_{(f)}$ is a Poisson algebra, i.e. $D_{+}(f)$ is a Poisson scheme. It is also clear from the proof of \cite[Theorem 8.2]{pol:97} that the Poisson scheme structures on $D_{+}(f)$ and $D_{+}(g)$ coincide on $D_{+}(f)\cap D_{+}(g)$ for all $f,g\in\mathcal{I}_Y\setminus\{0\}$.
	\end{remark}
	
	We now relate Theorem \ref{Theorem: Polishchuk} to the notions of \textit{Poisson rank} and \textit{regular locus} discussed in Section \ref{Subsection: Preliminaries}. As we have formulated these notions for Poisson varieties instead of Poisson schemes, we work in the former context.
	
	\begin{proposition}\label{Proposition: Poisson expansion}
		Consider an irreducible Poisson variety $X$ and closed Poisson subvariety $Y\subseteq X$ with $\dim (Y)<\dim (X)$. Assume that $\mathrm{Bl}_Y(X)$ carries a Poisson structure for which the blow-up morphism $\pi:\mathrm{Bl}_Y(X)\longrightarrow X$ is Poisson, and that $\mathcal{E}_Y(X)\subseteq \mathrm{Bl}_Y(X)$ is a Poisson subvariety with respect to this structure. The following statements hold:
		\begin{itemize}
			\item[\textup{(i)}] $\mathrm{Prk}(\mathrm{Bl}_Y(X))=\mathrm{Prk}(X)$;
			\item[\textup{(ii)}] $\pi^{-1}(X_{\emph{reg}})\subseteq \mathrm{Bl}_Y(X)_{\emph{reg}}$;
			\item[\textup{(iii)}] if $x\in\pi^{-1}(X_{\emph{reg}})$, then $T_x(L_x)=\{\mathrm{H}(\pi^*(f))_x:f\in\mathcal{O}_{X,\pi(x)}\}$;
			\item[\textup{(iv)}] if $Y\subseteq X_{\emph{reg}}$, then $\mathrm{Bl}_Y(X)_{\emph{reg}}=\pi^{-1}(X_{\emph{reg}})=\pi^{-1}(X_{\emph{reg}}\setminus Y)\cup\mathcal{E}_Y(X)$.
		\end{itemize}
	\end{proposition}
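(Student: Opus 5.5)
The approach is to apply Lemma \ref{Lemma: Inequality} to the Poisson morphism $\pi$, use the isomorphism \eqref{Equation: Isomorphism off divisor} over $X\setminus Y$ to compare Poisson ranks, and then treat the exceptional divisor using the fact that it is a Poisson subvariety.

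For (i), note that $\pi$ is Poisson, so Lemma \ref{Lemma: Inequality} gives $\dim(L_x)\geq\dim(L_{\pi(x)})$ for every $x\in\mathrm{Bl}_Y(X)$. For the reverse inequality we need that $\mathrm{Bl}_Y(X)$ has no leaf larger than $\mathrm{Prk}(X)$. The plan is to use Proposition \ref{Proposition: Open} and the fact that $X$ is irreducible with $\dim(Y)<\dim(X)$, so $X\setminus Y$ is dense. Then $X_{\text{reg}}\setminus Y$ is a nonempty open set. Over it, \eqref{Equation: Isomorphism off divisor} is a Poisson isomorphism of open subvarieties, so leaf dimensions agree there. (Leaves in an open Poisson subvariety have the same dimension, since the tangent spaces $T_x(L_x)$ are computed from stalks.) It remains to rule out points of $\mathrm{Bl}_Y(X)$ with leaf dimension exceeding $\mathrm{Prk}(X)$.

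I would handle this with the matrix rank description from the proof of Proposition \ref{Proposition: Open}. Rank of $M(y)\le r$ is a closed condition, and the closure of $\pi^{-1}(X\setminus Y)$ is all of $\mathrm{Bl}_Y(X)$, since the blow-up of an irreducible variety is irreducible with $\pi^{-1}(X\setminus Y)$ dense. On this dense open set the rank is at most $\mathrm{Prk}(X)$, so the rank is at most $\mathrm{Prk}(X)$ everywhere on each affine chart $D_+(f)$. This gives $\mathrm{Prk}(\mathrm{Bl}_Y(X))\le\mathrm{Prk}(X)$, and equality follows from the regular points over $X_{\text{reg}}\setminus Y$. I expect this step to be the main obstacle: one must make sure the rank of $M(y)$ equals the leaf dimension at every point, including singular points, which is what \cite{bro-gor:03} provides.

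For (ii) and (iii), let $x\in\pi^{-1}(X_{\text{reg}})$. Then $\dim L_x\ge\dim L_{\pi(x)}=\mathrm{Prk}(X)=\mathrm{Prk}(\mathrm{Bl}_Y(X))\ge\dim L_x$, so $x$ is regular and equality holds. The second part of Lemma \ref{Lemma: Inequality} then gives (iii).

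For (iv), assume $Y\subseteq X_{\text{reg}}$. Then $\mathcal{E}_Y(X)=\pi^{-1}(Y)\subseteq\pi^{-1}(X_{\text{reg}})$, so $\pi^{-1}(X_{\text{reg}})=\pi^{-1}(X_{\text{reg}}\setminus Y)\cup\mathcal{E}_Y(X)$, and (ii) gives one inclusion. Conversely, a regular point of $\mathrm{Bl}_Y(X)$ either lies in $\mathcal{E}_Y(X)$, and we are done, or lies off the divisor. In the latter case \eqref{Equation: Isomorphism off divisor} is a Poisson isomorphism onto an open set, so its image has leaf dimension $\mathrm{Prk}(X)$ and is in $X_{\text{reg}}\setminus Y$. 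The hypothesis that $\mathcal{E}_Y(X)$ is a Poisson subvariety is only needed to make sense of the setup; the leaf comparisons above do not use it further.
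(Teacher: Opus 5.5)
Your proposal is correct and follows essentially the paper's route: for (i) you combine density of $\mathrm{Bl}_Y(X)\setminus\mathcal{E}_Y(X)$ with lower semicontinuity of the Brown--Gordon rank, using closedness of the rank-$\leq r$ locus on the charts $D_{+}(f)$ where the paper instead invokes openness of $\mathrm{Bl}_Y(X)_{\mathrm{reg}}$ (Proposition \ref{Proposition: Open}), which is the same fact. Parts (ii)--(iv) then come from Lemma \ref{Lemma: Inequality} and the Poisson isomorphism off the exceptional divisor exactly as in the paper, and you are right that the hypothesis that $\mathcal{E}_Y(X)$ is a Poisson subvariety is never actually used.
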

	
	\begin{proof}
		We begin by proving (i). By Proposition \ref{Proposition: Open}, $\mathrm{Bl}_Y(X)_{\text{reg}}$ is open in $\mathrm{Bl}_Y(X)$. It therefore suffices to exhibit an open dense Poisson subvariety $U\subseteq \mathrm{Bl}_Y(X)$ satisfying $\mathrm{Prk}(U)=\mathrm{Prk}(X)$. To this end, recall that $\pi$ restricts to a Poisson variety isomorphism $\mathrm{Bl}_Y(X)\setminus\mathcal{E}_Y(X)\longrightarrow X\setminus Y$. This implies that $\mathrm{Prk}(\mathrm{Bl}_Y(X)\setminus\mathcal{E}_Y(X))=\mathrm{Prk}(X\setminus Y)$, and it is clear from Proposition \ref{Proposition: Open} that $\mathrm{Prk}(X\setminus Y)=\mathrm{Prk}(X)$. We may therefore take $U=\mathrm{Bl}_Y(X)\setminus\mathcal{E}_Y(X)$.
		
		To prove (ii), suppose that $x\in\pi^{-1}(X_{\text{reg}})$. We have $$\mathrm{Prk}(\mathrm{Bl}_Y(X))\geq\dim(L_{x})\geq\dim(L_{\pi(x)})=\mathrm{Prk}(X).$$ where the second inequality follows from Lemma \ref{Lemma: Inequality}. Part (i) now implies that $\dim(L_x)=\mathrm{Prk}(\mathrm{Bl}_Y(X))$, i.e. $x\in \mathrm{Bl}_Y(X)_{\text{reg}}$. This yields (ii). By combining Lemma \ref{Lemma: Inequality} with the fact that $\dim(L_x)=\dim(L_{\pi(x)})$, we obtain (iii).
		
		To prove (iv), we assume that $Y\subseteq X_{\text{reg}}$. Part (ii) implies that $\mathcal{E}_Y(X)=\pi^{-1}(Y)\subseteq \mathrm{Bl}_Y(X)_{\text{reg}}$. Recalling again that $\pi$ restricts to a Poisson variety isomorphism $\mathrm{Bl}_Y(X)\setminus\mathcal{E}_Y(X)\longrightarrow X\setminus Y$, we must also have $\mathrm{Bl}_Y(X)_{\text{reg}}\setminus\mathcal{E}_Y(X)=\pi^{-1}(X_{\text{reg}}\setminus Y)$. The last equality follows since $\pi$ further restricts to an isomorphism on the regular locus. These considerations yield
		\begin{align*}\mathrm{Bl}_Y(X)_{\text{reg}} & =(\mathrm{Bl}_Y(X)_{\text{reg}}\setminus\mathcal{E}_Y(X))\cup(\mathrm{Bl}_Y(X)_{\text{reg}}\cap\mathcal{E}_Y(X))\\
			& = \pi^{-1}(X_{\text{reg}}\setminus Y)\cup\mathcal{E}_Y(X)\\
			& = \pi^{-1}(X_{\text{reg}}\setminus Y)\cup\pi^{-1}(Y)\\
			& = \pi^{-1}(X_{\text{reg}}).
		\end{align*}
		This completes the proof.
	\end{proof}
	
	\subsection{A canonical integrable system}\label{Subsection: A canonical integrable system} 
	Suppose that $\mathcal{A}$ is a Poisson algebra over $\mathbb{C}$ whose underlying associative product is commutative and unital, and that $\mathcal{Z}(\mathcal{A})\subseteq\mathcal{A}$ is the Poisson center of $\mathcal{A}$. We set $$\mathfrak{X}\coloneqq\mathrm{Spec}(\mathcal{A}) \quad \quad \text{and} \quad \quad \mathfrak{B}\coloneqq\mathrm{Spec}(\mathcal{Z}(\mathcal{A})).$$ Then $\mathfrak{X}$ and $\mathfrak{B}$ are affine Poisson schemes over $\mathbb{C}$, where $\mathfrak{B}$ carries the zero Poisson structure. The inclusion $\mathcal{Z}(\mathcal{A})\subseteq\mathcal{A}$ induces a Poisson variety morphism $$\tau:\mathfrak{X}\longrightarrow \mathfrak{B}$$  that one may view as an integrable system on $\mathfrak{X}$. More generally, for our purposes an \textit{integrable system} is a morphism of affine Poisson schemes over $\mathbb{C}$, $\tau: X \longrightarrow Y$, such that for the algebras of global functions $\mathcal{A}:=\mathcal{O}_X(X)$ and $\mathcal{B}:=\mathcal{O}_Y(Y)$, we have that $\tau^\ast(\mathcal{B})$ is a Poisson-commutative subalgebra of $\mathcal{A}$.

	We ultimately wish to consider blow-ups of $\mathfrak{X}$ along fibers of $\tau:\mathfrak{X}\longrightarrow\mathfrak{B}$.	We start by recording some properties relating to ideals of $\mathcal{A}$, which we will apply to our setting; their proofs are straightforward.
	\begin{proposition}\label{Proposition: Computation}
		Suppose $\mathcal{A}$ is a Poisson algebra as above, and that $\mathcal{I} \subseteq \mathcal{A}$ is an ideal.
		\begin{itemize}
			\item[\textup{(i)}] If $\mathcal{I}$ is generated by a subset of $\mathcal{Z}(\mathcal{A})$, then $\mathcal{I}$ is Poisson and strongly coisotropic.
			\item [\textup{(ii)}] If $\mathcal{I}$ is Poisson and strongly coisotropic, then there exists a unique Poisson algebra structure on $\mathrm{Rees}_{\mathcal{I}}(\mathcal{A})$ satisfying
			$\{at^i,bt^j\}=\{a,b\}t^{i+j}$ for all $i,j\in\mathbb{Z}_{\geq 0}$, $a\in \mathcal{I}^i$, and $b\in \mathcal{I}^j$.
		\end{itemize} 
	\end{proposition}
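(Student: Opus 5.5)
For part (i), write $\mathcal{I}=\sum_{s\in S}s\mathcal{A}$ for a subset $S\subseteq\mathcal{Z}(\mathcal{A})$. A general element of $\mathcal{I}$ is a finite sum $\sum_k s_k a_k$ with $s_k\in S$ and $a_k\in\mathcal{A}$. Since the bracket is bilinear, it suffices to handle single terms $sa$. For any $b\in\mathcal{A}$, the Leibniz rule together with $\{s,b\}=0$ gives
$$\{sa,b\}=s\{a,b\}+a\{s,b\}=s\{a,b\}\in\mathcal{I}.$$
Hence $\mathcal{I}$ is a Poisson ideal. For two generators-times-elements $sa$ and $s'a'$, expand with Leibniz in both slots:
$$\{sa,s'a'\}=ss'\{a,a'\}+sa'\{a,s'\}+as'\{s,a'\}+aa'\{s,s'\}.$$
Every term involving a bracket with $s$ or $s'$ vanishes because these elements are central. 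What remains is $ss'\{a,a'\}$, which lies in $\mathcal{I}^2$. So $\{\mathcal{I},\mathcal{I}\}\subseteq\mathcal{I}^2$, and $\mathcal{I}$ is strongly coisotropic.

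For part (ii), uniqueness is immediate. $\mathrm{Rees}_{\mathcal{I}}(\mathcal{A})=\bigoplus_j\mathcal{I}^jt^j$ is spanned by homogeneous elements $at^i$ with $a\in\mathcal{I}^i$, so a bilinear bracket is determined by its values on these.

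For existence, I would first prove by induction that $\{\mathcal{I}^i,\mathcal{I}^j\}\subseteq\mathcal{I}^{i+j}$ for all $i,j\geq0$.
\begin{itemize}
\item When $i=0$ or $j=0$, this is exactly the Poisson ideal property, applied $j$ times via Leibniz.
\item When $i=j=1$, it is strong coisotropy.
\item In general, $\mathcal{I}^i$ is spanned by products $f_1\cdots f_i$ with $f_k\in\mathcal{I}$, and similarly for $\mathcal{I}^j$. Leibniz expands $\{f_1\cdots f_i,g_1\cdots g_j\}$ into a sum of terms, each of the form $\{f_k,g_l\}$ times the remaining $i+j-2$ factors. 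Each such term lies in $\mathcal{I}^2\cdot\mathcal{I}^{i+j-2}=\mathcal{I}^{i+j}$.
\end{itemize}

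With this containment in hand, define the bracket on homogeneous pieces by $\{at^i,bt^j\}\coloneqq\{a,b\}t^{i+j}$ and extend bilinearly. The definition needs a little care, because one element $a\in\mathcal{I}^{i}$ may also lie in $\mathcal{I}^{i'}$ for $i'\neq i$. This causes no ambiguity: the grading is external (recorded by the power of $t$), and the Rees algebra is a direct sum over degrees.

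A cleaner way to see well-definedness is to view the Rees algebra as a subalgebra of $\mathcal{A}[t]$, and put on $\mathcal{A}[t]$ the bracket extended $\mathbb{C}[t]$-bilinearly from $\mathcal{A}$, so that $t$ is Poisson central. This is a Poisson algebra structure, and it satisfies $\{at^i,bt^j\}=\{a,b\}t^{i+j}$ by construction. The containment above says precisely that $\mathrm{Rees}_{\mathcal{I}}(\mathcal{A})$ is closed under this bracket. Antisymmetry, the Jacobi identity and the Leibniz rule are then inherited from $\mathcal{A}[t]$ by restriction to the subalgebra.

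The only real step is the containment $\{\mathcal{I}^i,\mathcal{I}^j\}\subseteq\mathcal{I}^{i+j}$, and it is a routine Leibniz expansion.
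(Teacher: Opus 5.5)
Your proposal is correct and is essentially the straightforward argument the paper has in mind; the paper omits the proof. Part (i) is Leibniz plus centrality. Part (ii) rests on the containment $\{\mathcal{I}^i,\mathcal{I}^j\}\subseteq\mathcal{I}^{i+j}$, after which you restrict the $\mathbb{C}[t]$-bilinear bracket on $\mathcal{A}[t]$ (with $t$ Poisson central) to the Rees subalgebra, and that settles well-definedness and the Poisson axioms cleanly.
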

	Part (ii) of this proposition is used in Section~\ref{Subsection: Quantization}. Part (i) has the following consequence.
	
	\begin{corollary}\label{Corollary: Poisson structure on blow-up}
		Suppose that $Z\subseteq\mathfrak{B}$ is a closed subscheme. Then $(\mathfrak{X},\tau^{-1}(Z))$ satisfies Polishchuk's criterion.
	\end{corollary}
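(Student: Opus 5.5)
The plan is to reduce the corollary to Proposition \ref{Proposition: Computation}(i) by identifying the ideal of $\tau^{-1}(Z)$ in $\mathcal{A}$ explicitly. Since $\mathfrak{B}=\mathrm{Spec}(\mathcal{Z}(\mathcal{A}))$ is affine, the closed subscheme $Z$ corresponds to an ideal $J\subseteq\mathcal{Z}(\mathcal{A})$, with $Z=\mathrm{Spec}(\mathcal{Z}(\mathcal{A})/J)$. The scheme-theoretic fiber is $\tau^{-1}(Z)=\mathfrak{X}\times_{\mathfrak{B}}Z=\mathrm{Spec}(\mathcal{A}\otimes_{\mathcal{Z}(\mathcal{A})}\mathcal{Z}(\mathcal{A})/J)=\mathrm{Spec}(\mathcal{A}/J\mathcal{A})$. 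Hence the ideal of $\tau^{-1}(Z)$ in $\mathcal{A}$ is $\mathcal{I}\coloneqq J\mathcal{A}$, which is generated by the subset $J\subseteq\mathcal{Z}(\mathcal{A})$.

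Proposition \ref{Proposition: Computation}(i) then gives that $\mathcal{I}$ is Poisson and strongly coisotropic. For completeness, one can verify this directly. Given $z\in J$ and $a,b\in\mathcal{A}$, the Leibniz rule and centrality of $z$ give $\{za,b\}=z\{a,b\}\in\mathcal{I}$. Similarly, $\{za,z'b\}=zz'\{a,b\}\in\mathcal{I}^2$ for $z,z'\in J$. These identities extend by bilinearity to all of $\mathcal{I}$ and $\mathcal{A}$.

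The remaining step is to pass from the global ideal $\mathcal{I}\subseteq\mathcal{A}$ to the ideal sheaf $\mathcal{I}_{\tau^{-1}(Z)}=\widetilde{\mathcal{I}}\subseteq\mathcal{O}_{\mathfrak{X}}$, since Definition \ref{Definition: Polishchuk} is phrased sheaf-theoretically. It suffices to check on basic opens $D(s)$, $s\in\mathcal{A}$, where the sections are $\mathcal{I}_s\subseteq\mathcal{A}_s$ with the Poisson bracket extended to the localization. The ideal $\mathcal{I}_s$ is generated by $J$, and each $z\in J$ remains Poisson central in $\mathcal{A}_s$ because $\{z,a/s^n\}=(\{z,a\}s^n-a\{z,s^n\})/s^{2n}=0$. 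Therefore the computation of the previous paragraph applies verbatim in $\mathcal{A}_s$. Powers of the ideal sheaf are compatible with localization, since $(\mathcal{I}^2)_s=(\mathcal{I}_s)^2$, so both conditions hold sheaf-wise and on stalks.

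I expect no real obstacle. The only point needing care is this localization compatibility, and it is handled by the remark that central elements stay central after localizing.
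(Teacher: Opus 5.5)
Your proposal is correct and follows the paper's proof. Like the paper, you identify the ideal of $\tau^{-1}(Z)$ as the extension $J\mathcal{A}$ of the ideal of $Z$, observe that it is generated by Poisson-central elements, and invoke Proposition \ref{Proposition: Computation}(i); your direct bracket computation and your check that centrality and ideal powers survive localization (so the conditions hold for the ideal sheaf) fill in details the paper leaves implicit.
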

	
	\begin{proof}
		Let $\mathcal{I}\subseteq\mathcal{Z}(\mathcal{A})$ denote the ideal of $Z$ in $\mathfrak{B}$. The ideal of $\tau^{-1}(Z)$ in $\mathfrak{X}$ is $\langle\mathcal{I}\rangle\subseteq\mathcal{A}$. Since $\langle\mathcal{I}\rangle$ is generated by elements of $\mathcal{Z}(\mathcal{A})$, Proposition~\ref{Proposition: Computation}(i) implies that $\langle\mathcal{I}\rangle$ is Poisson and strongly coisotropic.
	\end{proof}
	
	\subsection{A family of Poisson blow-ups}\label{Subsection: A family}
	We continue with the setup of Section \ref{Subsection: A canonical integrable system}. Consider the graph of the integrable system $\tau:\mathfrak{X}\longrightarrow\mathfrak{B}$, i.e. the closed subscheme
	$$\Gamma\coloneqq \mathfrak{X}\times_{\mathfrak{B}}\mathfrak{B}\subseteq\mathfrak{X}\times\mathfrak{B}.$$ This gives rise to the blow-up
	$$\widetilde{\mathfrak{X}\times\mathfrak{B}}\coloneqq\mathrm{Bl}_{\Gamma}(\mathfrak{X}\times\mathfrak{B})\overset{\pi}\longrightarrow\mathfrak{X}\times\mathfrak{B}$$ of $\mathfrak{X}\times\mathfrak{B}$ along $\Gamma$, as well as the exceptional divisor
	$$\mathcal{E}_{\Gamma}(\mathfrak{X}\times\mathfrak{B})\subseteq\widetilde{\mathfrak{X}\times\mathfrak{B}}.$$ We write $\theta_{\mathfrak{X}}:\mathfrak{X}\times\mathfrak{B}\longrightarrow\mathfrak{X}$ and $\theta_{\mathfrak{B}}:\mathfrak{X}\times\mathfrak{B}\longrightarrow\mathfrak{B}$ for the usual projections, and $\pi_{\mathfrak{X}}:\widetilde{\mathfrak{X}\times\mathfrak{B}}\longrightarrow\mathfrak{X}$ and $\pi_{\mathfrak{B}}:\widetilde{\mathfrak{X}\times\mathfrak{B}}\longrightarrow\mathfrak{B}$ for the results of composing $\pi$ with these projections.
	
	In what follows, $\mathfrak{X}\times\mathfrak{B}$ carries the product of the Poisson structure on $\mathfrak{X}$ and zero Poisson structure on $\mathfrak{B}$. This is encoded in a Poisson bracket on the coordinate algebra $\mathcal{A}\otimes_{\mathbb{C}}\mathcal{Z}(\mathcal{A})$ of $\mathfrak{X}\times\mathfrak{B}$. A straightforward exercise shows that this bracket satisfies
	$$\{a_1\otimes b_1,a_2\otimes b_2\}=\{a_1,a_2\}\otimes b_1b_2$$ for all $a_1,a_2\in\mathcal{A}$ and $b_1,b_2\in\mathcal{Z}(\mathcal{A})$. We will show that this blow-up satisfies Polishchuk's criterion discussed in Definition~\ref{Definition: Polishchuk}.
	
	\begin{theorem}\label{Theorem:IntegrableSystem}
		With the setup above, the following statements are true.
		\begin{itemize}
			\item[\textup{(i)}] The pair $(\mathfrak{X}\times\mathfrak{B},\Gamma)$ satisfies Polishchuk's criterion. In particular, $\widetilde{\mathfrak{X}\times\mathfrak{B}}$ is Poisson and $\mathcal{E}_{\Gamma}(\mathfrak{X}\times\mathfrak{B})\subseteq\widetilde{\mathfrak{X}\times\mathfrak{B}}$ is a Poisson subscheme.
			\item[\textup{(ii)}] Assume that $\mathcal{Z}(\mathcal{A})$ is finitely generated as an associative algebra. For each closed point $b\in\mathfrak{B}$, there is a canonical scheme isomorphism $\pi_{\mathfrak{B}}^{-1}(b)\cong\mathrm{Bl}_{\tau^{-1}(b)}(\mathfrak{X})$.
			\item[\textup{(iii)}] Retain the setup of \textup{(ii)}, while equipping $\widetilde{\mathfrak{X}\times\mathfrak{B}}$ and $\mathrm{Bl}_{\tau^{-1}(b)}(\mathfrak{X})$ with the Poisson structures from \textup{(i)} and Corollary \ref{Corollary: Poisson structure on blow-up}, respectively. Then the composite morphism \begin{equation}\label{Equation: Composite}\mathrm{Bl}_{\tau^{-1}(b)}(\mathfrak{X})\overset{\cong}\longrightarrow\pi_{\mathfrak{B}}^{-1}(b)\longhookrightarrow\widetilde{\mathfrak{X}\times\mathfrak{B}}\end{equation} is Poisson.
		\end{itemize}
	\end{theorem}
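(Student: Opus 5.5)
For (i) I would write down the ideal of $\Gamma$ explicitly. Let $\mathcal{J}\subseteq\mathcal{A}\otimes_{\mathbb{C}}\mathcal{Z}(\mathcal{A})$ be the ideal generated by the elements $z\otimes 1-1\otimes z$ for $z\in\mathcal{Z}(\mathcal{A})$. This is the kernel of the surjection $\mathcal{A}\otimes\mathcal{Z}(\mathcal{A})\longrightarrow\mathcal{A}$, $a\otimes z\mapsto az$, and that surjection is the coordinate ring map of $\Gamma\cong\mathfrak{X}\hookrightarrow\mathfrak{X}\times\mathfrak{B}$, $x\mapsto(x,\tau(x))$. The key observation is that each generator $z\otimes 1-1\otimes z$ is Poisson central in $\mathcal{A}\otimes\mathcal{Z}(\mathcal{A})$. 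Using the formula $\{a_1\otimes b_1,a_2\otimes b_2\}=\{a_1,a_2\}\otimes b_1b_2$, we get $\{z\otimes 1,a\otimes b\}=\{z,a\}\otimes b=0$ because $z$ is central, and $\{1\otimes z,a\otimes b\}=\{1,a\}\otimes zb=0$. So $\mathcal{J}$ is generated by a subset of $\mathcal{Z}(\mathcal{A}\otimes\mathcal{Z}(\mathcal{A}))$. Proposition \ref{Proposition: Computation}(i), applied to the Poisson algebra $\mathcal{A}\otimes\mathcal{Z}(\mathcal{A})$, then gives that $\mathcal{J}$ is Poisson and strongly coisotropic. The ``in particular'' clause follows from Theorem \ref{Theorem: Polishchuk}(i),(ii), since strongly coisotropic implies degenerate.

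For (ii), fix a closed point $b\in\mathfrak{B}$ with maximal ideal $\mathfrak{m}_b\subseteq\mathcal{Z}(\mathcal{A})$. Then $\pi_{\mathfrak{B}}^{-1}(b)$ is $\mathrm{Proj}$ of the graded algebra
$$\mathrm{Rees}_{\mathcal{J}}(\mathcal{A}\otimes\mathcal{Z}(\mathcal{A}))\otimes_{\mathcal{Z}(\mathcal{A})}\mathcal{Z}(\mathcal{A})/\mathfrak{m}_b .$$
The plan is to identify this with $\mathrm{Rees}_{\mathcal{I}_b}(\mathcal{A})$, where $\mathcal{I}_b=\mathfrak{m}_b\mathcal{A}$, at least in large degrees so that the Proj's agree. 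There is a natural surjection onto $\mathrm{Rees}_{\mathcal{I}_b}(\mathcal{A})$: the map $\mathcal{A}\otimes\mathcal{Z}(\mathcal{A})\to\mathcal{A}$, $a\otimes z\mapsto az(b)$, sends $\mathcal{J}^j$ onto $\mathcal{I}_b^j$. This works because $z\otimes1-1\otimes z\mapsto z-z(b)$, and elements of the form $z-z(b)$ generate $\mathfrak{m}_b\mathcal{A}$. The main obstacle is that fiber products do not commute with blow-ups in general, so the degreewise kernel of this surjection need not be the expected one. I would handle this by a change of coordinates. Since $\mathcal{Z}(\mathcal{A})$ is finitely generated, choose generators $z_1,\dots,z_r$; then $\mathcal{J}$ is generated by $w_i=z_i\otimes1-1\otimes z_i$. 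Next, use the automorphism of $\mathcal{A}\otimes\mathcal{Z}(\mathcal{A})$ over $\mathcal{Z}(\mathcal{A})$ (the shear $(x,c)\mapsto(x,c)$ reinterpreted as an $\mathfrak{X}$-family) to view $\Gamma$ as the pullback of the diagonal-type subscheme. Concretely, $\mathfrak{X}\times\mathfrak{B}$ maps to $\mathfrak{B}\times\mathfrak{B}$ via $\tau\times\mathrm{id}$, and $\Gamma$ is the preimage of the diagonal $\Delta$. The fiber over $b$ of the second projection then becomes the preimage of the point $(b,b)$ restricted to $\mathfrak{X}\times\{b\}$. I would then argue directly, on each chart $D_+(w)$, that the closed subscheme $\pi_{\mathfrak{B}}^{-1}(b)$ coincides with the strict transform of $\mathfrak{X}\times\{b\}$. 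Since $\mathcal{J}|_{\mathfrak{X}\times\{b\}}=\mathcal{I}_b$, this strict transform is $\mathrm{Bl}_{\tau^{-1}(b)}(\mathfrak{X})$. For that I would check that on $D_+(w_i)$ the elements $1\otimes(z_j-z_j(b))$ lie in the ideal generated by the fiber equations plus $w$-torsion. I expect this torsion comparison to be the genuinely delicate step. If it fails in general, I would note that the intended statement needs only the identification of $\mathrm{Proj}$ of the reduced-degree quotient, and verify it via the surjection above together with the fact that both sides agree away from the exceptional loci.

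For (iii), I would work chart by chart as in Remark \ref{Remark: Affine open}. The map $\mathcal{A}\otimes\mathcal{Z}(\mathcal{A})\to\mathcal{A}$, $a\otimes z\mapsto az(b)$, is a Poisson homomorphism, because $\mathfrak{B}$ has zero bracket and evaluation at $b$ kills nothing in the bracket formula. It induces a graded surjection of Rees algebras $\{a t^i,c t^j\}\mapsto$ matching brackets, which by Proposition \ref{Proposition: Computation}(ii) is compatible with the Rees Poisson structures. Localizing at $w_i\mapsto z_i-z_i(b)$ and taking degree-zero parts gives Poisson surjections $R_{(w_i)}\to R'_{(z_i-z_i(b))}$. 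These are exactly the coordinate maps of the composite \eqref{Equation: Composite} on affine charts. They agree on overlaps by Remark \ref{Remark: Affine open}, so the composite is Poisson.
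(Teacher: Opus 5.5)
Your parts (i) and (iii) follow the paper. The generators $z\otimes 1-1\otimes z$ of the ideal of $\Gamma$ are Poisson central, so Proposition \ref{Proposition: Computation}(i) applies. The Poisson surjection $a\otimes z\mapsto z(b)a$ induces Poisson surjections between the degree-zero localizations of the two Rees algebras. Note what this chart argument actually proves: the \emph{proper transform} $\mathrm{Bl}_{\tau^{-1}(b)}(\mathfrak{X})\hookrightarrow\widetilde{\mathfrak{X}\times\mathfrak{B}}$ is a Poisson closed embedding. It equals the composite \eqref{Equation: Composite} only once (ii) is known.

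The genuine gap is (ii), and you located it correctly: $\pi_{\mathfrak{B}}^{-1}(b)=\pi^{-1}(\mathfrak{X}\times\{b\})$ is the total transform. Neither your chart-wise torsion check nor your fallback (``both sides agree away from the exceptional loci'') can succeed. The discrepancy lies entirely inside the exceptional divisor, and in the stated generality the isomorphism is false.

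Here is a counterexample. Take $\mathcal{A}=\mathbb{C}[x,y]/(xy)$ with the zero bracket, so $\mathcal{Z}(\mathcal{A})=\mathcal{A}$, $\tau=\mathrm{id}$, and $\Gamma$ is the diagonal; let $b$ be the node. Set $x_2=1\otimes x$, $y_2=1\otimes y$, $u=x\otimes 1-1\otimes x$, $v=y\otimes 1-1\otimes y$. Then $\mathcal{A}\otimes\mathcal{A}=\mathbb{C}[x_2,y_2,u,v]/(x_2y_2,\,x_2v+uy_2+uv)$, and the ideal of $\Gamma$ is $(u,v)$. On $D_+(u)$, with $s=v/u$, one finds $y_2=-s(x_2+u)$, and the chart is $\mathrm{Spec}\,\mathbb{C}[x_2,u,s]/(s\,x_2(x_2+u))$; here $u$ is a nonzerodivisor. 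Imposing $x_2=y_2=0$ gives $\mathrm{Spec}\,\mathbb{C}[u,s]/(us)$, whereas the matching chart of $\mathrm{Bl}_b(\mathfrak{X})$ is $\mathrm{Spec}\,\mathbb{C}[u]$. Globally, $\pi_{\mathfrak{B}}^{-1}(b)$ is the two disjoint lines of $\mathrm{Bl}_b(\mathfrak{X})$ joined by the whole fiber $\pi^{-1}(b,b)\cong\mathbb{P}^1$. It is therefore connected, while $\mathrm{Bl}_b(\mathfrak{X})$ is not. In your notation, the degreewise kernels $(\mathcal{J}^j\cap(\mathcal{A}\otimes\mathfrak{m}_b))/(\mathcal{A}\otimes\mathfrak{m}_b)\mathcal{J}^j$ are not irrelevant.

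The paper's proof of (ii) has the same hole. It identifies $\pi^{-1}(\theta_{\mathfrak{B}}^{-1}(b))$ with $\mathrm{Bl}_{\Gamma\cap\theta_{\mathfrak{B}}^{-1}(b)}(\theta_{\mathfrak{B}}^{-1}(b))$ via the cited lemma, but that blow-up is the proper transform, a closed subscheme of the total transform.

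An extra hypothesis is needed, and your idea of pulling back along $\tau\times\mathrm{id}:\mathfrak{X}\times\mathfrak{B}\to\mathfrak{B}\times\mathfrak{B}$ is the right way to use one. Suppose $\tau$ is flat and $\mathfrak{B}$ is smooth.
\begin{enumerate}
\item Flat base change gives $\widetilde{\mathfrak{X}\times\mathfrak{B}}\cong\mathfrak{X}\times_{\mathfrak{B}}\mathrm{Bl}_{\Delta}(\mathfrak{B}\times\mathfrak{B})$, fibered via the first projection.
\item Near the diagonal, $\Delta$ is cut out by the regular sequence $z_i\otimes 1-1\otimes z_i$ for local coordinates $z_i$, and its restriction to $\mathfrak{B}\times\{b\}$ stays regular. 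Hence the fiber of $\mathrm{Bl}_\Delta(\mathfrak{B}\times\mathfrak{B})$ over $b$ in the second factor is $\mathrm{Bl}_b(\mathfrak{B})$.
\item A second flat base change yields $\pi_{\mathfrak{B}}^{-1}(b)\cong\mathfrak{X}\times_{\mathfrak{B}}\mathrm{Bl}_b(\mathfrak{B})\cong\mathrm{Bl}_{\tau^{-1}(b)}(\mathfrak{X})$.
\end{enumerate}
Both hypotheses hold for the adjoint quotient: $\mathfrak{c}\cong\mathbb{C}^{\ell}$, and $\tau$ is flat by Kostant's freeness theorem. There one can equally compare the linear-type presentations of the two Rees algebras using Lemma \ref{Lemma: Regular}. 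So the Lie-theoretic results survive. The node example, where $\tau$ is flat but $\mathfrak{B}$ is singular, shows that smoothness of $\mathfrak{B}$ cannot be dropped.
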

	
	\begin{proof}
		Let $\mathcal{I}\subseteq\mathcal{A}\otimes_{\mathbb{C}}\mathcal{Z}(\mathcal{A})$ denote the ideal of $\Gamma$ in $\mathfrak{X}\times\mathfrak{B}$. By Theorem \ref{Theorem: Polishchuk}, Part \textup{(i)} would follow once we show that $\mathcal{I}$ is Poisson and strongly coisotropic. We first observe that
		\begin{equation}\label{Equation: Generators1}\mathcal{I}=\langle f\otimes 1-1\otimes f:f\in\mathcal{Z}(\mathcal{A})\rangle.\end{equation} In particular, $\mathcal{I}$ is generated by a subset of the Poisson center $\mathcal{Z}(\mathcal{A}\otimes_{\mathbb{C}}\mathcal{Z}(\mathcal{A}))\subseteq \mathcal{A}\otimes_{\mathbb{C}}\mathcal{Z}(\mathcal{A})$. Proposition \ref{Proposition: Computation} now implies that $\mathcal{I}$ is Poisson and strongly coisotropic.
		
		We next prove \textup{(ii)}. Consider the ideal $\mathcal{J}\subseteq\mathcal{Z}(\mathcal{A})$ of $\{b\}$ in $\mathfrak{B}$, noting that $\mathcal{A}\otimes_{\mathbb{C}}\mathcal{J}\subseteq\mathcal{A}\otimes_{\mathbb{C}}\mathcal{Z}(\mathcal{A})$ is the ideal of $\theta_{\mathfrak{B}}^{-1}(b)$ in $\mathfrak{X}\times\mathfrak{B}$. At the same time, $\pi_{\mathfrak{B}}^{-1}(b)$ is the scheme-theoretic preimage $\pi^{-1}(\theta_{\mathfrak{B}}^{-1}(b))$. It follows that $\pi_{\mathfrak{B}}^{-1}(b)$ is canonically isomorphic to the blow-up of $\theta_{\mathfrak{B}}^{-1}(b)$ along the scheme-theoretic intersection $\Gamma\cap\theta_{\mathfrak{B}}^{-1}(b)$ \cite[Lemma 22.2.7]{vak:25}. The ideal of this intersection in $\theta_{\mathfrak{B}}^{-1}(b)$ is $(\mathcal{I}+\mathcal{A}\otimes_{\mathbb{C}}\mathcal{J})/(\mathcal{A}\otimes_{\mathbb{C}}\mathcal{J})\subseteq(\mathcal{A}\otimes_{\mathbb{C}}\mathcal{Z}(\mathcal{A}))/(\mathcal{A}\otimes_{\mathbb{C}}\mathcal{J})$, and the ideal of $\tau^{-1}(b)$ in $\mathfrak{X}$ is $\langle \mathcal{J}\rangle\subseteq\mathcal{A}$. It therefore suffices to exhibit an algebra isomorphism $\mathcal{A}\overset{\cong}\longrightarrow(\mathcal{A}\otimes_{\mathbb{C}}\mathcal{Z}(\mathcal{A}))/(\mathcal{A}\otimes_{\mathbb{C}}\mathcal{J})$ that identifies $\langle\mathcal{J}\rangle$ with $(\mathcal{I}+\mathcal{A}\otimes_{\mathbb{C}}\mathcal{J})/(\mathcal{A}\otimes_{\mathbb{C}}\mathcal{J})$.
		
		Since $\mathcal{Z}(\mathcal{A})$ is a finitely generated and $b$ is a closed point in $\mathfrak{B}$, we have $$(\mathcal{A}\otimes_{\mathbb{C}}\mathcal{Z}(\mathcal{A}))/(\mathcal{A}\otimes_{\mathbb{C}}\mathcal{J})\cong\mathcal{A}\otimes_{\mathbb{C}}(\mathcal{Z}(\mathcal{A})/\mathcal{J})\cong\mathcal{A}\otimes_{\mathbb{C}}\mathbb{C}\cong\mathcal{A}.$$ By composing these isomorphisms, we obtain a Poisson algebra isomorphism $$\varphi:\mathcal{A}\overset{\cong}\longrightarrow(\mathcal{A}\otimes_{\mathbb{C}}\mathcal{Z}(\mathcal{A}))/(\mathcal{A}\otimes_{\mathbb{C}}\mathcal{J}),\quad a\mapsto[a\otimes 1].$$ It remains only to prove that $\varphi(\langle\mathcal{J}\rangle)=(\mathcal{I}+\mathcal{A}\otimes_{\mathbb{C}}\mathcal{J})/(\mathcal{A}\otimes_{\mathbb{C}}\mathcal{J})$. We observe that $\mathcal{A}\otimes_{\mathbb{C}}\mathcal{J}=\langle 1\otimes f:f\in\mathcal{J}\rangle$, which together with Equation~\eqref{Equation: Generators1} implies that
		$$\mathcal{I}+\mathcal{A}\otimes_{\mathbb{C}}\mathcal{J}=\langle f\otimes 1 - 1\otimes f:f\in\mathcal{J}\rangle+\langle 1\otimes f:f\in\mathcal{J}\rangle=\langle f\otimes 1:f\in\mathcal{J}\rangle+\mathcal{A}\otimes_{\mathbb{C}}\mathcal{J}.$$ We conclude that $$(\mathcal{I}+\mathcal{A}\otimes_{\mathbb{C}}\mathcal{J})/(\mathcal{A}\otimes_{\mathbb{C}}\mathcal{J})=\langle[f\otimes 1]:f\in\mathcal{J}\rangle=\varphi(\langle J\rangle).$$
		
		Finally, we prove Part~\textup{(iii)}. Our proof of \textup{(ii)} implies that $\theta_{\mathfrak{B}}^{-1}(b)$ is a Poisson subscheme of $\mathfrak{X}\times\mathfrak{B}$, and that the natural isomorphism $\mathfrak{X}\overset{\cong}\longrightarrow\theta_{\mathfrak{B}}^{-1}(b)$ is one of Poisson schemes. The same proof implies that this isomorphism identifies the subschemes $\tau^{-1}(b)\subseteq\mathfrak{X}$ and $\Gamma\cap\theta_{\mathfrak{B}}^{-1}(b)\subseteq\theta_{\mathfrak{B}}^{-1}(b)$. We thereby obtain a Poisson scheme isomorphism
		$$\mathrm{Bl}_{\tau^{-1}(b)}(\mathfrak{X})\overset{\cong}\longrightarrow\mathrm{Bl}_{\Gamma\cap\theta_{\mathfrak{B}}^{-1}(b)}(\theta_{\mathfrak{B}}^{-1}(b)),$$
		where both Poisson structures are defined via Theorem \ref{Theorem: Polishchuk}. The image of \eqref{Equation: Composite} is $\mathrm{Bl}_{\Gamma\cap\theta_{\mathfrak{B}}^{-1}(b)}(\theta_{\mathfrak{B}}^{-1}(b))$, viewed as a closed subscheme of $\widetilde{\mathfrak{X}\times\mathfrak{B}}$ via \cite[Lemma 22.2.7]{vak:25}. It therefore suffices to prove that the closed embedding of $\mathrm{Bl}_{\Gamma\cap\theta_{\mathfrak{B}}^{-1}(b)}(\theta_{\mathfrak{B}}^{-1}(b))$ into $\widetilde{\mathfrak{X}\times\mathfrak{B}}$ is Poisson.
		
		The quotient map $$\mathcal{A}\otimes_{\mathbb{C}}\mathcal{Z}(\mathcal{A})\longrightarrow(\mathcal{A}\otimes_{\mathbb{C}}\mathcal{Z}(\mathcal{A}))/(\mathcal{A}\otimes_{\mathbb{C}}\mathcal{J})$$ induces a surjective morphism $$\psi:\mathrm{Rees}_{\mathcal{I}}(\mathcal{A}\otimes_{\mathbb{C}}\mathcal{Z}(\mathcal{A}))\longrightarrow\mathrm{Rees}_{(\mathcal{I}+\mathcal{A}\otimes_{\mathbb{C}}\mathcal{J})/(\mathcal{A}\otimes_{\mathbb{C}}\mathcal{J})}((\mathcal{A}\otimes_{\mathbb{C}}\mathcal{Z}(\mathcal{A}))/(\mathcal{A}\otimes_{\mathbb{C}}\mathcal{J}))$$ of $\mathbb{Z}_{\geq 0}$--graded algebras. By applying Proj to $\psi$, we obtain the closed embedding of $\mathrm{Bl}_{\Gamma\cap\theta_{\mathfrak{B}}^{-1}(b)}(\theta_{\mathfrak{B}}^{-1}(b))$ in $\widetilde{\mathfrak{X}\times\mathfrak{B}}$ referenced above. On the other hand, it is clear that $\psi$ induces surjective Poisson algebra morphisms on the algebras corresponding to the affine open sets described in Remark~\ref{Remark: Affine open}. This combines with Polishchuk's proof of Theorem \ref{Theorem: Polishchuk} to imply that the closed embedding of $\mathrm{Bl}_{\Gamma\cap\theta_{\mathfrak{B}}^{-1}(b)}(\theta_{\mathfrak{B}}^{-1}(b))$ into $\widetilde{\mathfrak{X}\times\mathfrak{B}}$ is Poisson.
	\end{proof}
	
	\section{The adjoint quotient}
	
	We now specialize the constructions in Section~\ref{Section: Poisson-geometric results} to the case where $\mathcal{A}=\mathrm{S}(\g)=\mathrm{S}(\g^*)$ for a finite-dimensional complex semisimple Lie algebra $\g$. This specialization involves properties of the adjoint quotient map $\g\longrightarrow\mathrm{Spec}(\mathrm{S}(\g^*)^G)\eqqcolon\c$, where $G$ is a connected complex semisimple group with Lie algebra $\g$. In Section \ref{Subsection: Lie-theoretic setup}, we review Kostant's results on the adjoint quotient. This leads to Section \ref{Subsection: Regular orbits}, where we discuss blow-ups of $\mathfrak{g}$ along closures of regular adjoint orbits. Section \ref{Subsection: Regular semisimple orbits} then specializes to regular semisimple orbits, yielding the symplectic leaves of the corresponding blow-ups. We obtain results on the exceptional divisor of the blow-up along a regular adjoint orbit closure in Section \ref{Subsection: The exceptional divisor}. In Section \ref{Subsection: Relative version}, we study the Poisson family $\widetilde{\g\times\c}\longrightarrow\c$ of blow-ups along regular adjoint orbit closures. This family is shown to be flat and conical, and its total space $\widetilde{\g\times\c}$ is given a Hamiltonian $G$-variety structure. In Section~\ref{Subsection: Quantization}, we discuss the structure and Poisson bracket on the Rees algebra encoding $\widetilde{\g\times\c}$, with a view towards quantization.
	
	\subsection{Lie-theoretic setup}\label{Subsection: Lie-theoretic setup}
	Let $\g$ be a finite-dimensional complex semisimple Lie algebra of rank $\ell$, and $G$ be a connected complex semisimple group integrating $\g$. Consider the adjoint representations
	$$\mathrm{Ad}:G\longrightarrow\mathrm{GL}(\g)\quad\text{and}\quad\mathrm{ad}:\g\longrightarrow\mathfrak{gl}(\g)$$ of $G$ and $\g$, respectively. Write $G_x\subseteq G$ and $\g_x\coloneqq\mathrm{ker}(\ad_x)\subseteq\g$ for the $G$-stabilizer and $\g$-centralizer of $x\in\g$; then $\g_x$ is the Lie algebra of $G_x$. An element $x\in\g$ is called \textit{regular} if $\dim(\g_x)=\ell$, and we write $\g_{\text{reg}}\subseteq\g$ for the $G$-invariant, open, dense subset of regular elements. One calls $x\in\g$ \textit{semisimple} (resp. \textit{nilpotent}) if the endomorphism $\mathrm{ad}_x:\g\longrightarrow\g$ is diagonalizable (resp. nilpotent). An adjoint orbit $\mathcal{O}\subseteq\g$ is called regular (resp. semisimple, nilpotent) if it contains a regular (resp. semisimple, nilpotent) element. An adjoint orbit $\mathcal{O}$ is semisimple if and only if $\overline{\mathcal{O}}=\mathcal{O}$, and nilpotent if and only if $0\in\overline{\mathcal{O}}$.
	
	The symmetric algebra $\mathrm{S}(\mathfrak{g})$ carries a unique Poisson bracket satisfying $\{x,y\}=[x,y]$ for all $x,y\in\mathfrak{g}$. On the other hand, the Killing form induces a $G$-module isomorphism $\g^*\cong\g$. The Poisson bracket on $\mathrm{S}(\g)$ thereby determines one on $\mathrm{S}(\g^*)$, rendering $\g$ an affine Poisson variety with symplectic leaves given by the adjoint $G$-orbits. It follows that the above-defined subset $\g_{\text{reg}}\subseteq\g$ coincides with the regular locus of $\g$ as a Poisson variety. Moreover, the Poisson center $\mathcal{Z}(\mathrm{S}(\g^*))=\mathrm{S}(\g^*)^G$ is a polynomial algebra on $\ell$ indeterminates. 
	
	The \textit{adjoint quotient} of $\mathfrak{g}$ is the affine Poisson variety morphism induced by the inclusion $\mathrm{S}(\g^*)^G\subseteq\mathrm{S}(\g^*)$:
	$$\g=\mathrm{Spec}(\mathrm{S}(\g^*))\overset{\tau}\longrightarrow\mathrm{Spec}(\mathrm{S}(\g^*)^G)=\g\sll{}G\eqqcolon\mathfrak{c},$$
	where $\mathfrak{c}$ carries the zero Poisson structure. It has rich properties in geometric representation theory, and its study was initiated by Kostant in \cite{kos:63}. The fibers of $\tau$ are precisely the closures of the regular adjoint orbits in $\g$ \cite[Theorem 3]{kos:63}, and so are irreducible. Each fiber is also known to be reduced \cite[Lemma 4]{kos:63} and a complete intersection \cite[Theorem 0.7]{kos:63}.
	
	\subsection{Regular orbits}\label{Subsection: Regular orbits}
	Let $\mathcal{O}\subseteq\g$ be a regular adjoint orbit. Note that the defining ideal of $\overline{\mathcal{O}}$ in $\g$ is $\mathcal{I}_{\overline{\mathcal{O}}}=\langle f_1-\alpha_1,\ldots,f_{\ell}-\alpha_{\ell}\rangle$ for a unique $(\alpha_1,\ldots,\alpha_{\ell})\in\mathbb{C}^{\ell}$, where $f_1,\ldots,f_{\ell}$ are homogeneous generators of $\mathrm{S}(\g^*)^G$ that form a regular sequence in $\mathrm{S}(\g^*)$. Recall that for a commutative unital algebra $A$ over $\mathbb{C}$, an ordered tuple $(a_1,\ldots,a_k)\in A^k$ is called a \textit{regular sequence} if $a_i$ is not a zero-divisor in $A/\langle a_1,\ldots,a_{i-1}\rangle$ for all $i\in\{1,\ldots,k\}$. Our next result is that $f_1-\alpha_1,\ldots,f_{\ell}-\alpha_{\ell}$ form a regular sequence in $\mathrm{S}(\g^*)$. 
	
	\begin{lemma}\label{Lemma: Regular}
		Let $f_1,\ldots,f_{\ell}\in\mathrm{S}(\g^*)^G$ be as above. For all $\alpha_1,\ldots,\alpha_{\ell}\in\mathbb{C}$, $(f_1-\alpha_1,\ldots,f_{\ell}-\alpha_{\ell})$ is a regular sequence in $\mathrm{S}(\g^*)$.
	\end{lemma}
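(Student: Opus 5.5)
The strategy is to reduce to the standard fact that, in a graded Noetherian ring, a sequence of homogeneous elements of positive degree is regular if and only if its quotient has the expected codimension. We then pass from $f_i$ to $f_i-\alpha_i$ by a dimension count, using that $\mathrm{S}(\g^*)$ is Cohen--Macaulay. Set $A\coloneqq\mathrm{S}(\g^*)$, a polynomial ring of dimension $n\coloneqq\dim\g$. For $\alpha=(\alpha_1,\ldots,\alpha_\ell)\in\mathbb{C}^\ell$ and $k\in\{0,\ldots,\ell\}$, write $J_k\coloneqq\langle f_1-\alpha_1,\ldots,f_k-\alpha_k\rangle$.

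\emph{Step 1.} In a Cohen--Macaulay ring, a sequence $a_1,\ldots,a_k$ of elements lying in every maximal ideal containing them is regular if and only if $\mathrm{ht}\langle a_1,\ldots,a_k\rangle=k$ at every such maximal ideal. Equivalently, every irreducible component of $V(a_1,\ldots,a_k)$ has codimension $k$, provided this locus is nonempty. This is the unmixedness theorem, applied locally. A zero-divisor test on $A/J_{i-1}$ can be localized at maximal ideals, and for a non-unit sequence in a local Cohen--Macaulay ring, being regular is equivalent to the height condition. So it suffices to show that for each $k$, every irreducible component of $V(J_k)\subseteq\g$ has dimension exactly $n-k$, and that $V(J_k)\neq\emptyset$.

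\emph{Step 2.} Nonemptiness follows from the surjectivity of $\tau$: since $\mathfrak{c}\cong\mathbb{C}^\ell$ has coordinates $f_1,\ldots,f_\ell$, we have $\tau(\g)=\mathfrak{c}$. For instance, Kostant's section or the Chevalley restriction theorem shows that every point of $\mathfrak{c}$ is attained. For the dimension count, Krull's principal ideal theorem gives that each component of $V(J_k)$ has dimension at least $n-k$. For the upper bound, note that $V(J_k)=\tau^{-1}(L_k)$, where $L_k\subseteq\mathfrak{c}$ is the affine subspace $\{f_1=\alpha_1,\ldots,f_k=\alpha_k\}$ of dimension $\ell-k$. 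By Kostant, every fiber of $\tau$ has dimension $n-\ell$; this is the regular-orbit-closure description recalled above. Hence each irreducible component $Z$ of $\tau^{-1}(L_k)$ satisfies $\dim Z\le\dim\overline{\tau(Z)}+(n-\ell)\le(\ell-k)+(n-\ell)=n-k$, using the fiber dimension theorem for the dominant map $Z\to\overline{\tau(Z)}$.

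\emph{Step 3.} Combining the bounds, every component of $V(J_k)$ has codimension exactly $k$. By Step 1, $f_k-\alpha_k$ is a non-zero-divisor on $A/J_{k-1}$ for each $k$. To apply the local criterion cleanly, I would note that $J_{k-1}$ is unmixed of height $k-1$, since $A/J_{k-1}$ is Cohen--Macaulay by induction. Its associated primes are then the minimal ones, which are the components of $V(J_{k-1})$. The function $f_k-\alpha_k$ vanishes identically on none of these components, since otherwise that component would be a component of $V(J_k)$ of dimension $n-k+1$, contradicting Step 2. It is therefore a non-zero-divisor.

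The main obstacle is the uniform fiber-dimension bound used in Step 2, which here is supplied by Kostant's theorem that all fibers of $\tau$ are regular orbit closures of dimension $n-\ell$. The remaining steps are standard commutative algebra. Alternatively, one could use flatness of $\tau$, since $A$ is free over $A^G$, and deduce regularity of the translated sequence directly from the fact that $f_1-\alpha_1,\ldots,f_\ell-\alpha_\ell$ is regular in the polynomial ring $A^G$.
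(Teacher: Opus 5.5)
Your proof is correct, but it takes a different route from the paper. The paper's argument is purely algebraic. Kostant's freeness of $\mathrm{S}(\g^*)$ over $\mathrm{S}(\g^*)^G$ \cite[Theorem 11]{kos:63} gives, via \cite[Proposition II.2.1]{pan:06}, that $f_1,\ldots,f_\ell$ is a regular sequence in every order. The cited result \cite[Corollary 5.3]{hem} then says that translating such a permutable regular sequence by constants keeps it regular.

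You instead use three ingredients:
\begin{itemize}
\item Kostant's geometric description of the fibers of $\tau$, which all have dimension $n-\ell$;
\item the fiber-dimension theorem, to show that every $\tau^{-1}(L_k)$ is equidimensional of codimension $k$;
\item Macaulay unmixedness in the polynomial ring, to get the non-zero-divisor property by induction.
\end{itemize}
Your Step 3 is a complete argument on its own. What your route buys is self-containment modulo standard Cohen--Macaulay theory, plus extra information: every partial fiber $\tau^{-1}(L_k)$ is an equidimensional complete intersection. The paper's route is shorter, but it outsources the key step to two external lemmas.

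The two Kostant inputs are essentially interchangeable here. Since $\g$ and $\mathfrak{c}$ are smooth, miracle flatness shows that equidimensionality of the fibers of $\tau$ is equivalent to flatness of $\tau$. This is why the alternative in your last paragraph is the cleanest of all three arguments. The elements $f_i-\alpha_i$ are translated coordinates in $\mathrm{S}(\g^*)^G\cong\mathbb{C}[f_1,\ldots,f_\ell]$, so they are trivially regular there. Tensoring the injective multiplication maps with the flat (indeed free) module $\mathrm{S}(\g^*)$ preserves injectivity. This bypasses both \cite{pan:06} and \cite{hem}.

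One small caveat concerns Step 1. The ``if and only if'' there, phrased via the height of the full ideal at the maximal ideals containing it, is false in non-local rings. For example, in $\mathbb{C}[x,y,z]$ the sequence $y(1-x),z(1-x),x$ cuts out the origin, which has codimension $3$. Yet it is not regular, because $z(1-x)$ kills the class of $y$ modulo $y(1-x)$. This does no harm to your proof, since you verify the codimension condition for every prefix $J_k$ and your Step 3 does not use the misstated equivalence.
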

	
	\begin{proof}
		By \cite[Theorem 11]{kos:63}, $\mathrm{S}(\g^*)$ is free as a module over $\mathrm{S}(\g^*)^G$. This implies that $(f_{\sigma(1)},\ldots,f_{\sigma(\ell)})$ is a regular sequence in $\mathrm{S}(\g^*)$ for all $\sigma\in S_{\ell}$ \cite[Proposition II.2.1]{pan:06}. An application of \cite[Corollary 5.3]{hem} now reveals that $(f_1-\alpha_1,\ldots,f_{\ell}-\alpha_{\ell})$ is a regular sequence in $\mathrm{S}(\g^*)$ for all $\alpha_1,\ldots,\alpha_{\ell}\in\mathbb{C}$.
	\end{proof}

	Since $\overline{\mathcal{O}}$ is a fiber of $\tau$, Corollary \ref{Corollary: Poisson structure on blow-up} yields a unique Poisson variety structure on $\mathrm{Bl}_{\overline{\mathcal{O}}}(\g)$ for which the blow-up morphism $\pi:\mathrm{Bl}_{\overline{\mathcal{O}}}(\g)\longrightarrow\g$ is Poisson. The same corollary tells us that $\mathcal{E}_{\overline{\mathcal{O}}}(\g)\coloneqq\pi^{-1}(\overline{\mathcal{O}})\subseteq\mathrm{Bl}_{\overline{\mathcal{O}}}(\g)$ is a Poisson subvariety. To expand on these facts, consider the morphism $$\phi:\g\setminus\overline{\mathcal{O}}\longrightarrow\mathbb{P}^{\ell-1},\quad x\mapsto [f_1(x)-\alpha_1:\cdots:f_{\ell}(x)-\alpha_{\ell}]$$ and its graph $\Gamma_{\phi}\subseteq\g\times\mathbb{P}^{\ell-1}$. Recall that $\mathrm{Bl}_{\overline{\mathcal{O}}}(\g)$ is the closure $\overline{\Gamma_{\phi}}\subseteq\g\times\mathbb{P}^{\ell-1}$, and that the blow-up morphism $\pi:\mathrm{Bl}_{\overline{\mathcal{O}}}(\g)\longrightarrow\g$ is obtained by restricting the projection $\g\times\mathbb{P}^{\ell-1}\longrightarrow\g$ to $\mathrm{Bl}_{\overline{\mathcal{O}}}(\g)$ \cite[Section 4]{hau}. Using Lemma \ref{Lemma: Regular} and \cite[Section 4]{hau}, we obtain 
	\begin{equation}\label{Equation:Blowup}\mathrm{Bl}_{\overline{\mathcal{O}}}(\g)=\{(x,[z_1:\cdots: z_{\ell}])\in\g\times\mathbb{P}^{\ell-1}:(f_i(x)-\alpha_i)z_j=(f_j(x)-\alpha_j)z_i\text{ for all }i,j\in\{1,\ldots,\ell\}\}.\end{equation}
	Since $\phi$ is $G$-invariant, $\mathrm{Bl}_{\overline{\mathcal{O}}}(\g)$ is invariant under the $G$-action on $\g\times\mathbb{P}^{\ell-1}$ given by $g\cdot (x,[v])=(\mathrm{Ad}_g(x),[v])$ for all $g\in G$ and $(x,[v])\in\g\times\mathbb{P}^{\ell-1}$. Note that $\pi$ is equivariant with respect to this $G$-action on $\mathrm{Bl}_{\overline{\mathcal{O}}}(\g)$. The action also leaves $\mathcal{E}_{\overline{\mathcal{O}}}(\g)$ invariant, so that $\pi$ restricts to a $G$-equivariant Poisson variety isomorphism $\mathrm{Bl}_{\overline{\mathcal{O}}}(\g)\setminus\mathcal{E}_{\overline{\mathcal{O}}}(\g)\longrightarrow\g\setminus\overline{\mathcal{O}}$. We conclude that $\mathrm{Bl}_{\overline{\mathcal{O}}}(\g)\setminus\mathcal{E}_{\overline{\mathcal{O}}}(\g)$ is a Poisson Hamiltonian $G$-variety with moment map $$\pi\big\vert_{\mathrm{Bl}_{\overline{\mathcal{O}}}(\g)\setminus\mathcal{E}_{\overline{\mathcal{O}}}(\g)}:\mathrm{Bl}_{\overline{\mathcal{O}}}(\g)\setminus\mathcal{E}_{\overline{\mathcal{O}}}(\g)\longrightarrow\g.$$ 
	If $\mathcal{O}$ is semisimple, then $\mathrm{Bl}_{\overline{\mathcal{O}}}(\g)=\mathrm{Bl}_{\mathcal{O}}(\g)$ is smooth. The fact that $\mathrm{Bl}_{\overline{\mathcal{O}}}(\g)\setminus\mathcal{E}_{\overline{\mathcal{O}}}(\g)$ is open and dense in $\mathrm{Bl}_{\overline{\mathcal{O}}}(\g)$ then implies that the latter is a smooth Poisson Hamiltonian $G$-variety with moment map $\pi$.

	\subsection{Regular semisimple orbits}\label{Subsection: Regular semisimple orbits}
	
	Suppose that $\mathcal{O}$ is a regular semisimple adjoint orbit. It follows that $\overline{\mathcal{O}}=\mathcal{O}$ is smooth, so that $\mathrm{Bl}_{\overline{\mathcal{O}}}(\g)=\mathrm{Bl}_{\mathcal{O}}(\g)$ and $\mathcal{E}_{\overline{\mathcal{O}}}(\g)=\mathcal{E}_{\mathcal{O}}(\g)$ are smooth as well. Applying Proposition \ref{Proposition: Poisson expansion} and letting $\pi:\mathrm{Bl}_{\mathcal{O}}(\g)\longrightarrow\g$ denote the blow-up morphism, we get that $\mathrm{Prk}(\mathrm{Bl}_{\mathcal{O}}(\g))=\mathrm{Prk}(\g)=\dim(\g)-\ell$ and $$\mathrm{Bl}_{\mathcal{O}}(\g)_{\text{reg}}=\pi^{-1}(\g_{\text{reg}})=\pi^{-1}(\g_{\text{reg}}\setminus\mathcal{O})\cup\mathcal{E}_{\mathcal{O}}(\g).$$ In the following result, we identify the symplectic leaves and $G$-orbits in $\mathrm{Bl}_{\mathcal{O}}(\g)$.
	
	\begin{proposition}\label{Proposition: Orbit coincidence}
		If $\mathcal{O}\subseteq\g$ is a regular semisimple adjoint orbit, then the symplectic leaves of $\mathrm{Bl}_{\mathcal{O}}(\g)$ coincide with the $G$-orbits in $\mathrm{Bl}_{\mathcal{O}}(\g)$. 
	\end{proposition}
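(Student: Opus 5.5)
Since $\mathrm{Bl}_{\mathcal{O}}(\g)$ is smooth, its symplectic leaves are the leaves of the holomorphic Poisson manifold structure. The plan is to compare, at each point $p\in\mathrm{Bl}_{\mathcal{O}}(\g)$, the tangent space $T_p(L_p)$ with the tangent space $T_p(G\cdot p)$ of the $G$-orbit. If these agree everywhere, then leaves and orbits are both connected integral manifolds of the same distribution, so they coincide. Connectedness of orbits holds because $G$ is connected, and leaves are the maximal connected integral manifolds.

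\textbf{Away from the exceptional divisor.} By Section \ref{Subsection: Regular orbits}, $\pi$ restricts to a $G$-equivariant Poisson isomorphism $\mathrm{Bl}_{\mathcal{O}}(\g)\setminus\mathcal{E}_{\mathcal{O}}(\g)\cong\g\setminus\mathcal{O}$. In $\g$ the leaves are the adjoint orbits, so on this open set leaves and $G$-orbits correspond. Alternatively, one can use the Hamiltonian description below uniformly.

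\textbf{In general.} The key identity is that, for a linear function $\xi\in\g\cong\g^*$ viewed in $\mathrm{S}(\g^*)$, the Hamiltonian vector field $\mathrm{H}(\pi^*\xi)$ equals (up to sign) the generating vector field of $\xi$ for the $G$-action on $\mathrm{Bl}_{\mathcal{O}}(\g)$. This identity holds on the dense open complement of $\mathcal{E}_{\mathcal{O}}(\g)$ because $\pi$ is a $G$-equivariant Poisson isomorphism there, and $\g$ is Hamiltonian with moment map the identity. Both sides are regular vector fields on the smooth (hence reduced) variety $\mathrm{Bl}_{\mathcal{O}}(\g)$, so they agree everywhere. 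This is exactly the statement, recorded in Section \ref{Subsection: Regular orbits}, that $\mathrm{Bl}_{\mathcal{O}}(\g)$ is a Poisson Hamiltonian $G$-variety with moment map $\pi$. Consequently $T_p(G\cdot p)=\{\mathrm{H}(\pi^*\xi)_p:\xi\in\g\}\subseteq T_p(L_p)$ for every $p$.

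\textbf{Reverse inclusion.} This is the main step. For $p\in\pi^{-1}(\g_{\mathrm{reg}})$, Proposition \ref{Proposition: Poisson expansion}(iii) gives $T_p(L_p)=\{\mathrm{H}(\pi^*f)_p:f\in\mathcal{O}_{\g,\pi(p)}\}$. Writing $\mathrm{H}(\pi^*f)_p$ via the chain rule, it equals $\mathrm{H}(\pi^*\xi)_p$ with $\xi=\mathrm{d}f_{\pi(p)}$. To justify this, note that $\{\pi^*f,\cdot\}$ at $p$ depends only on $\mathrm{d}(\pi^*f)_p=\pi^*(\mathrm{d}f_{\pi(p)})$, since $\mathrm{H}(h)_p=\sigma^\vee(\mathrm{d}h_p)$ on a smooth Poisson variety. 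Hence $T_p(L_p)\subseteq T_p(G\cdot p)$ for such $p$.

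It remains to treat points outside $\pi^{-1}(\g_{\mathrm{reg}})$. Since $\mathcal{O}\subseteq\g_{\mathrm{reg}}$, Proposition \ref{Proposition: Poisson expansion}(iv) gives $\mathcal{E}_{\mathcal{O}}(\g)\subseteq\pi^{-1}(\g_{\mathrm{reg}})$. Therefore every point outside $\pi^{-1}(\g_{\mathrm{reg}})$ lies off the exceptional divisor, where the earlier argument applies. The same chain-rule argument in fact works at every point, since $\mathrm{H}(h)_p=\sigma^\vee(\mathrm{d}h_p)$ and $\mathrm{d}(\pi^*f)_p=\pi^*\mathrm{d}f_{\pi(p)}$ always. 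The only subtlety is the exceptional divisor, where a general function $h$ need not be a pullback, and it is exactly there that part (iii) is needed. Combining the two inclusions gives equality of the distributions, and hence the proposition.
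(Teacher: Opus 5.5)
Your proposal is correct and follows essentially the same route as the paper. Both reduce to comparing the symplectic distribution with the distribution spanned by generating vector fields, handle points off $\mathcal{E}_{\mathcal{O}}(\g)$ via the $G$-equivariant Poisson isomorphism $\mathrm{Bl}_{\mathcal{O}}(\g)\setminus\mathcal{E}_{\mathcal{O}}(\g)\cong\g\setminus\mathcal{O}$, and treat the exceptional divisor, which lies in $\pi^{-1}(\g_{\mathrm{reg}})$, using Proposition \ref{Proposition: Poisson expansion} together with $G$-equivariance. Your write-up is more explicit than the paper's, which only appeals to ``a proof similar to that of Lemma \ref{Lemma: Inequality}'' plus $\dim(L_x)=\dim(L_{\pi(x)})$; you spell out the identity $\mathrm{H}(\pi^*\xi)=\pm(\text{generating field of }\xi)$ and the chain-rule reduction to linear functions, which that step tacitly needs.
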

	
	\begin{proof}
		Noting that $\mathrm{Bl}_{\mathcal{O}}(\g)$ is smooth, it suffices to prove that the symplectic distribution of $\mathrm{Bl}_{\mathcal{O}}(\g)$ coincides with the distribution induced by generating vector fields for the $G$-action on $\mathrm{Bl}_{\mathcal{O}}(\g)$. Our task is therefore to prove the following for all $x\in\mathrm{Bl}_{\mathcal{O}}(\g)$: $T_x(L_x)\subseteq T_x(\mathrm{Bl}_{\mathcal{O}}(\g))$ is the subspace of generating vector fields at $x$ for the $G$-action on $\mathrm{Bl}_{\mathcal{O}}(\g)$. We first assume that $x\in\mathrm{Bl}_{\mathcal{O}}(\g)\setminus\mathcal{E}_{\mathcal{O}}(\g)$. Note that $T_{\pi(x)}(L_{\pi(x)})\subseteq\g$ is the subspace of generating vector fields at $\pi(x)$ for the adjoint action. Since $\pi$ restricts to a $G$-equivariant Poisson isomorphism $\mathrm{Bl}_{\mathcal{O}}(\g)\setminus\mathcal{E}_{\mathcal{O}}(\g)\longrightarrow\g\setminus\mathcal{O}$, $T_x(L_x)$ is as advertised in the statement of our task. It therefore suffices to assume that $x\in\mathcal{E}_{\mathcal{O}}(\g)$. As before, $T_{\pi(x)}(L_{\pi(x)})=T_{\pi(x)}\mathcal{O}\subseteq\g$ is the subspace of generating vector fields at $\pi(x)$ for the adjoint action. Proposition \ref{Proposition: Poisson expansion} also tells us that $\dim(L_x)=\dim(L_{\pi(x)})$. Using the $G$-equivariance of $\pi$, a proof similar to that of Lemma \ref{Lemma: Inequality} implies that $T_x(L_x)$ is as advertised. This completes the proof.
	\end{proof}

	\subsection{The exceptional divisor}\label{Subsection: The exceptional divisor} Let $\mathcal{O}\subseteq\g$ be a regular adjoint orbit. We use the description of $\overline{\mathcal{O}}$ in terms of a regular sequence to characterize the exceptional divisor $\mathcal{E}_{\overline{\mathcal{O}}}(\g)$.

	\begin{proposition}\label{Proposition: Regular orbit}
		If $\mathcal{O}\subseteq\g$ is a regular adjoint orbit, then $\mathcal{E}_{\overline{\mathcal{O}}}(\g)\cong\overline{\mathcal{O}}\times\mathbb{P}^{\ell-1}$. 
	\end{proposition}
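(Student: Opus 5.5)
We will use the standard description of the exceptional divisor of a blow-up along a subscheme cut out by a regular sequence. Write $I\coloneqq\mathcal{I}_{\overline{\mathcal{O}}}=\langle g_1,\ldots,g_{\ell}\rangle\subseteq\mathrm{S}(\g^*)$ with $g_i\coloneqq f_i-\alpha_i$. By definition, the exceptional divisor is
$$\mathcal{E}_{\overline{\mathcal{O}}}(\g)=\mathrm{Proj}\Big(\bigoplus_{j\geq 0}I^j/I^{j+1}\Big),$$
the projectivized normal cone of $\overline{\mathcal{O}}$ in $\g$. Lemma \ref{Lemma: Regular} says that $(g_1,\ldots,g_{\ell})$ is a regular sequence. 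We will invoke the classical theorem of Rees (quasi-regularity of regular sequences) to obtain an isomorphism of graded $\mathrm{S}(\g^*)/I$-algebras
$$(\mathrm{S}(\g^*)/I)[z_1,\ldots,z_{\ell}]\overset{\cong}\longrightarrow\bigoplus_{j\geq 0}I^j/I^{j+1},\quad z_i\mapsto [g_i]\in I/I^2.$$
Concretely, this means that $I^j/I^{j+1}$ is a free $\mathrm{S}(\g^*)/I$-module with basis the degree-$j$ monomials in the classes $[g_i]$.

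Taking $\mathrm{Proj}$ of both sides gives
$$\mathcal{E}_{\overline{\mathcal{O}}}(\g)\cong\mathrm{Proj}\big((\mathrm{S}(\g^*)/I)[z_1,\ldots,z_{\ell}]\big)=\overline{\mathcal{O}}\times\mathbb{P}^{\ell-1},$$
using that $\mathrm{S}(\g^*)/I$ is the coordinate ring of $\overline{\mathcal{O}}$. Here $\overline{\mathcal{O}}$ is a reduced fibre of $\tau$ (\cite[Lemma 4]{kos:63}), so it carries its reduced structure and the product is a variety.

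As a consistency check, we can compare with the explicit model \eqref{Equation:Blowup}. Under the identification of $\mathrm{Bl}_{\overline{\mathcal{O}}}(\g)$ with the closed subscheme of $\g\times\mathbb{P}^{\ell-1}$ defined by $g_iz_j=g_jz_i$, the preimage of $\overline{\mathcal{O}}$ is cut out by adding the equations $g_1=\cdots=g_{\ell}=0$. These make all the relations $g_iz_j-g_jz_i$ vanish. Hence $\pi^{-1}(\overline{\mathcal{O}})$ is the closed subscheme $\overline{\mathcal{O}}\times\mathbb{P}^{\ell-1}\subseteq\g\times\mathbb{P}^{\ell-1}$. This argument relies on \eqref{Equation:Blowup} being a scheme-theoretic equality, which is exactly what the regular-sequence statement of \cite[Section 4]{hau} provides. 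So either route works, and we will present the Rees-algebra one for robustness.

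The main point needing care is the application of Rees' theorem. It requires the regular sequence to lie in the Jacobson radical, or the ring to be graded with the sequence homogeneous of positive degree. Neither hypothesis holds here, since the $g_i$ are inhomogeneous when some $\alpha_i\neq 0$. We will handle this as follows: the normal cone is compatible with localization, so it suffices to check that $\bigoplus_j I^j/I^{j+1}$ is the polynomial algebra after localizing at each maximal ideal containing $I$. In such a local ring, a regular sequence contained in the maximal ideal is quasi-regular by the local version of the theorem (Matsumura, Theorem 16.2). At maximal ideals not containing $I$, both sides vanish. Gluing these local isomorphisms, which are all induced by the single global map $z_i\mapsto[g_i]$, yields the global isomorphism, and hence the proposition.
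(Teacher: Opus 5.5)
Your proof is correct. Your main route differs from the paper's; the paper's proof is essentially what you call the consistency check.

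The paper feeds Lemma \ref{Lemma: Regular} into \cite[Section 4]{hau} to get the explicit model \eqref{Equation:Blowup} of $\mathrm{Bl}_{\overline{\mathcal{O}}}(\g)$ inside $\g\times\mathbb{P}^{\ell-1}$, cut out by $g_iz_j=g_jz_i$. It then observes that imposing $g_1=\cdots=g_{\ell}=0$ kills every one of these relations, so $\pi^{-1}(\overline{\mathcal{O}})=\overline{\mathcal{O}}\times\mathbb{P}^{\ell-1}$.

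Your main route works directly with $\mathrm{Proj}\big(\bigoplus_j I^j/I^{j+1}\big)$ and the quasi-regularity of regular sequences. The two inputs are closely related. The scheme-theoretic version of \eqref{Equation:Blowup} comes from the fact that the Rees algebra of a regular sequence is presented by the relations $g_iz_j-g_jz_i$ \cite[Corollary 5.5.6]{hun-swa:06}, which is also used in Section \ref{Subsection: Quantization}. Tensoring that presentation with $\mathrm{S}(\g^*)/I$ gives exactly your polynomial-ring description of $\bigoplus_j I^j/I^{j+1}$. Your route targets the exceptional divisor directly and makes its scheme structure explicit. The paper's route is shorter once \eqref{Equation:Blowup} is in place, but it implicitly needs that equation to hold scheme-theoretically rather than just on points, as you correctly flag.

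One correction: your localization detour is unnecessary. The implication from regular to quasi-regular (Matsumura, Theorem 16.2(i)) holds over an arbitrary commutative ring and module. It needs no locality, Jacobson-radical, or grading hypothesis; those hypotheses matter for the converse and for permuting regular sequences. Your localization argument is still valid, so nothing breaks. Localization is exact, the sequence stays regular in $\mathrm{S}(\g^*)_{\mathfrak{m}}$ when $I\subseteq\mathfrak{m}$, and both sides vanish when $I\not\subseteq\mathfrak{m}$.
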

	
	\begin{proof}
		Let $(\alpha_1,\ldots,\alpha_{\ell})\in\mathbb{C}^{\ell}$ be such that $\overline{\mathcal{O}}=(f_1,\ldots,f_{\ell})^{-1}(\alpha_1,\ldots,\alpha_{\ell})$. This amounts to the ideal of $\overline{\mathcal{O}}$ in $\g$ being generated by $f_1-\alpha_1,\ldots,f_{\ell}-\alpha_{\ell}$. Lemma \ref{Lemma: Regular} implies that these generators form a regular sequence in $\mathrm{S}(\g^*)$. Using the description of $\mathrm{Bl}_{\overline{\mathcal{O}}}(\g)$ in Equation~\ref{Equation:Blowup}, it follows that $\mathcal{E}_{\overline{\mathcal{O}}}(\g)=\pi^{-1}(\overline{\mathcal{O}})=\overline{\mathcal{O}}\times\mathbb{P}^{\ell-1}$.
	\end{proof}
	
	A more geometrically conceptual approach to capturing some of the structure of $\mathcal{E}_{\overline{\mathcal{O}}}(\g)$ is by means of normal bundles. More precisely, one can proceed as follows. The $G$-action on $\mathcal{O}$ induces one on $(T\g)\big\vert_{\mathcal{O}}$ by vector bundle automorphisms. This action preserves $T\mathcal{O}$, and so yields a $G$-action on the normal bundle $\nu_{\mathcal{O}}\coloneqq(T\g)\big\vert_{\mathcal{O}}/T\mathcal{O}$ by vector bundle automorphisms. At the same time, the fiber of $\nu_{\mathcal{O}}$ at $x\in\mathcal{O}$ is the $G_x$-module quotient $\g/[\g,x]$. This allows one to write
	$$\nu_{\mathcal{O}}=\{(x,[\xi]):x\in\mathcal{O},\text{ }[\xi]\in\g/[\g,x]\},$$ after which the bundle map $\nu_{\mathcal{O}}\longrightarrow\mathcal{O}$ becomes projection to the first factor. The action of $G$ on $\nu_{\mathcal{O}}$ is then given by
	$$g\cdot(x,[\xi])\coloneqq(\mathrm{Ad}_g(x),[\mathrm{Ad}_g(\xi)]),\quad g\in G,\text{ }(x,[\xi])\in \nu_{\mathcal{O}}.$$ 
	
	Given $x\in\mathcal{O}$, consider the $G$-equivariant vector bundle $G\times^{G_x}(\g/[\g,x])\longrightarrow G/G_x$, associated to the $G_x$-module $\g/[\g,x]$. Using the $G$-variety isomorphism
	$$G/G_x\overset{\cong}\longrightarrow\mathcal{O},\quad [g]\mapsto\mathrm{Ad}_g(x),$$ we can regard $G\times^{G_x}(\g/[\g,x])$ as a $G$-equivariant vector bundle over $\mathcal{O}$. It follows that \begin{equation}\label{Equation: Trivialization}G\times^{G_x}(\g/[\g,x])\overset{\cong}\longrightarrow \nu_{\mathcal{O}},\quad[g:[\xi]]\mapsto(\mathrm{Ad}_g(x),[\mathrm{Ad}_g(\xi)])\end{equation} is a $G$-equivariant isomorphism of vector bundles over $G/G_x\cong\mathcal{O}$. On the other hand, recall from Section \ref{Subsection: Regular orbits} that $\mathrm{Bl}_{\overline{\mathcal{O}}}(\g)$ carries a $G$-action for which the blow-up morphism $\pi:\mathrm{Bl}_{\overline{\mathcal{O}}}(\g)\longrightarrow\g$ is $G$-equivariant. These considerations lead to the following result.

	\begin{proposition}\label{Proposition: Trivial bundle}
		Suppose that $\mathcal{O}\subseteq\g$ is a regular adjoint orbit. 
		\begin{itemize}
			\item[\textup{(i)}] The normal bundle $\nu_{\mathcal{O}}$ is trivial as a $G$-equivariant vector bundle over $\mathcal{O}$.
			\item[\textup{(ii)}] There is a $G$-equivariant isomorphism $\pi^{-1}(\mathcal{O})\cong\mathcal{O}\times\mathbb{P}^{\ell-1}$ of varieties over $\mathcal{O}$, where $G$ acts on $\mathcal{O}\times\mathbb{P}^{\ell-1}$ via the adjoint action on the first factor.
		\end{itemize}
	\end{proposition}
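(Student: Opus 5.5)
For (i), I will trivialize $\nu_{\mathcal{O}}$ using the differentials of the invariant generators $f_1,\ldots,f_\ell$. The map
\[
\Phi:\nu_{\mathcal{O}}\longrightarrow\mathcal{O}\times\mathbb{C}^{\ell},\qquad (y,[\xi])\mapsto\bigl(y,(\mathrm{d}f_1)_y(\xi),\ldots,(\mathrm{d}f_\ell)_y(\xi)\bigr)
\]
is a candidate trivialization, and the checks proceed as follows.

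\emph{Well-definedness.} Each $f_i$ is $G$-invariant, so $\mathrm{d}f_i$ vanishes on the generating vector fields. Hence $(\mathrm{d}f_i)_y$ kills $[\g,y]=T_y\mathcal{O}$ and descends to $\g/[\g,y]$.

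\emph{Equivariance.} Invariance gives $(\mathrm{d}f_i)_{\mathrm{Ad}_g y}(\mathrm{Ad}_g\xi)=(\mathrm{d}f_i)_y(\xi)$. So $\Phi$ intertwines the $G$-action on $\nu_{\mathcal{O}}$ with the action on $\mathcal{O}\times\mathbb{C}^\ell$ that is adjoint on the first factor and trivial on the second.

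\emph{Fiberwise isomorphism.} Since $\mathcal{O}$ is regular, $\dim\g/[\g,y]=\dim\g_y=\ell$. Kostant's theorem \cite{kos:63} says $(\mathrm{d}f_1)_y,\ldots,(\mathrm{d}f_\ell)_y$ are linearly independent exactly when $y$ is regular. Therefore $\Phi_y:\g/[\g,y]\to\mathbb{C}^\ell$ is a linear map between $\ell$-dimensional spaces that is surjective, hence an isomorphism.

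\emph{Algebraicity.} $\Phi$ is an algebraic morphism of vector bundles that is fiberwise an isomorphism, so it is an isomorphism of $G$-equivariant vector bundles. (Equivalently, via \eqref{Equation: Trivialization}, this says the $G_x$-action on $\g/[\g,x]$ is trivial.) This gives (i).

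For (ii), I will first identify $\pi^{-1}(\mathcal{O})$ with the projectivized normal bundle. One could argue via $\mathcal{E}\cong\mathbb{P}(\nu)$ over the smooth locus, but the explicit description \eqref{Equation:Blowup} is more direct. By Proposition~\ref{Proposition: Regular orbit} and its proof, $\pi^{-1}(\overline{\mathcal{O}})=\overline{\mathcal{O}}\times\mathbb{P}^{\ell-1}\subseteq\g\times\mathbb{P}^{\ell-1}$ is a closed subscheme. Restricting over the open set $\mathcal{O}\subseteq\overline{\mathcal{O}}$ gives $\pi^{-1}(\mathcal{O})=\mathcal{O}\times\mathbb{P}^{\ell-1}$ as a subvariety of $\g\times\mathbb{P}^{\ell-1}$. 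The $G$-action on $\mathrm{Bl}_{\overline{\mathcal{O}}}(\g)$ is the restriction of $g\cdot(x,[v])=(\mathrm{Ad}_g x,[v])$, so this identification is automatically $G$-equivariant for the action that is adjoint on the first factor. Projection to $\mathcal{O}$ is compatible with $\pi$, so it is an isomorphism over $\mathcal{O}$.

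I would also remark that this matches (i): the coordinates $[z_1:\cdots:z_\ell]$ correspond to $[(\mathrm{d}f_i)_y(\xi)]$ under $\mathbb{P}(\nu_{\mathcal{O}})\cong\mathcal{O}\times\mathbb{P}^{\ell-1}$. Since this consistency is not needed for the statement, I would only note it.

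The main obstacle is the fiberwise-isomorphism step in (i), namely the independence of the differentials $\mathrm{d}f_i$ at every point of a regular orbit, including non-semisimple ones. This is precisely Kostant's differential criterion for regularity, which I would cite from \cite{kos:63} rather than reprove.
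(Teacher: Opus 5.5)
Your proof is correct, but it takes a different route from the paper in both parts.

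For (i), the paper works at a single point $x\in\mathcal{O}$. It identifies $(\g/[\g,x])^*$ with $[\g,x]^{\perp}=\g_x$ via the Killing form, argues that $\g_x$ is a trivial $G_x$-module because $G_x$ is abelian, and then trivializes the associated bundle $G\times^{G_x}(\g/[\g,x])$ from \eqref{Equation: Trivialization}. You instead write down a global equivariant trivialization $\Phi$ from the differentials of the basic invariants, and Kostant's differential criterion gives fiberwise bijectivity. Your version avoids needing that the possibly disconnected stabilizer $G_x$ is abelian, a nontrivial fact the paper uses without proof. The differential criterion you rely on instead is one the paper needs in (ii) anyway. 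The paper's version is more representation-theoretic and exhibits the triviality of the isotropy module directly.

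For (ii), the paper identifies $\pi^{-1}(\mathcal{O})$ with $\mathbb{P}(\nu_{\mathcal{O}})$ via $(x,[\xi])\mapsto(x,[\mathrm{d}f_x(\xi)])$. It uses the normal-cone description of the exceptional divisor over the smooth locus $\mathcal{O}$ of $\overline{\mathcal{O}}$, and then invokes (i). You read $\pi^{-1}(\mathcal{O})=\mathcal{O}\times\mathbb{P}^{\ell-1}$ directly off the equations \eqref{Equation:Blowup}, i.e.\ by restricting Proposition~\ref{Proposition: Regular orbit} to $\mathcal{O}$. Equivariance is then automatic, since $G$ acts trivially on the $\mathbb{P}^{\ell-1}$ factor of $\g\times\mathbb{P}^{\ell-1}$. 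This makes (ii) independent of (i) and of normal-cone theory. The paper's version explains the $\mathbb{P}^{\ell-1}$ geometrically as the projectivized normal bundle.

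As you note, the two are compatible: the paper's isomorphism $\mathbb{P}(\nu_{\mathcal{O}})\to\pi^{-1}(\mathcal{O})$ is exactly the projectivization of your $\Phi$.
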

	
	\begin{proof}
		To prove (i), choose $x\in\mathcal{O}$. Note that the dual $G_x$-module $(\g/[\g,x])^*$ is isomorphic to the annihilator of $[\g,x]$ in $\g^*$. The Killing form identifies the latter module with $[\g,x]^{\perp}=\g_x$. Since $G_x$ is abelian, $\g_x$ is a trivial $G_x$-module. This argument shows $(\g/[\g,x])^*$ to be a trivial $G_x$-module, implying that $\g/[\g,x]$ is trivial as well. It now follows from \eqref{Equation: Trivialization} that  
		$$\nu_{\mathcal{O}}\cong(G/G_x)\times(\g/[\g,x])$$ as $G$-equivariant vector bundles over $G/G_x\cong\mathcal{O}$. 
		
		We now verify (ii). Observe that $\dim(\g/[\g,x])=\dim(\g_x)=\ell$ for all $x\in\mathcal{O}$. Part (i) now implies that $\mathbb{P}(\nu_{\mathcal{O}})\cong\mathcal{O}\times\mathbb{P}^{\ell-1}$ as varieties over $\mathcal{O}$. It remains only to establish that $\mathbb{P}(\nu_{\mathcal{O}})$ and $\pi^{-1}(\mathcal{O})$ are $G$-equivariantly isomorphic as varieties over $\mathcal{O}$. A first step is to choose homogeneous, algebraically independent generators $f_1,\ldots,f_{\ell}$ of $\mathrm{S}(\g^*)^G$.  
		Consider the map $$f:\g\longrightarrow\mathbb{C}^{\ell},\quad x\mapsto (f_1(x),\ldots,f_{\ell}(x)).$$ For $x\in\g$, $\mathrm{rank}(\mathrm{d}f_x)=\ell$ if and only if $x\in\g_{\text{reg}}$. One also knows that $\overline{\mathcal{O}}\cap\g_{\text{reg}}=\mathcal{O}$ \cite[Theorem 3]{kos:63}. These considerations imply that $\mathcal{O}$ is the smooth locus of $\overline{\mathcal{O}}$, yielding the isomorphism
		$$\mathbb{P}(\nu_{\mathcal{O}})\overset{\cong}\longrightarrow\pi^{-1}(\mathcal{O}),\quad (x,[\xi])\mapsto (x,[\mathrm{d}f_x(\xi)])$$ of varieties over $\mathcal{O}$; see \cite[Section 22.3]{vak:25} and \cite[Appendix B.6]{ful}. Since $\mathrm{d}f_{\mathrm{Ad_g(x)}}=\mathrm{d}f_x\circ\mathrm{Ad}_{g^{-1}}$ for all $g\in G$ and $x\in\g$, this isomorphism is $G$-equivariant.
	\end{proof}

	\subsection{The flat conical family $\widetilde{\g\times\c}$}\label{Subsection: Relative version} We now examine the results of Section \ref{Subsection: A family} when $\mathcal{A}=\mathrm{S}(\g^*)$. In the notation of that section, we have that $\mathfrak{X}=\g$, $\mathfrak{B}=\mathrm{Spec}(\mathrm{S}(\g^*)^G)=\mathfrak{c}$, and $\tau:\mathfrak{X}\longrightarrow\mathfrak{B}$ is the adjoint quotient $\tau:\g\longrightarrow\mathfrak{c}$. The graph of $\tau$ is $\Gamma\coloneqq \g\times_{\mathfrak{c}}\mathfrak{c}\subseteq\g\times\mathfrak{c}$. Consider the blow-up of $\g\times\mathfrak{c}$ along this closed subscheme,
	$$\pi:\widetilde{\g\times\mathfrak{c}}\coloneqq\mathrm{Bl}_{\Gamma}(\g\times\mathfrak{c})\longrightarrow\g\times\mathfrak{c}.$$ Since $\g\cong\Gamma$ as schemes, $\widetilde{\g\times\mathfrak{c}}$ is the blow-up of a smooth variety along a smooth closed subscheme. It follows that $\widetilde{\g\times\mathfrak{c}}$ is smooth. Write $\theta_{\mathfrak{g}}:\mathfrak{g}\times\mathfrak{c}\longrightarrow\mathfrak{g}$ and $\theta_{\mathfrak{c}}:\mathfrak{g}\times\mathfrak{c}\longrightarrow\mathfrak{c}$ for the usual projections, and $$\pi_{\mathfrak{g}}:\widetilde{\mathfrak{g}\times\mathfrak{c}}\longrightarrow\mathfrak{g}\quad \text{and} \quad \pi_{\mathfrak{c}}:\widetilde{\mathfrak{g}\times\mathfrak{c}}\longrightarrow\mathfrak{c}$$ for the results of composing $\pi$ with $\theta_\g$ and $\theta_\c$ respectively. Let $G$ act on $\g\times\mathfrak{c}$ by $g\cdot(x,c)\coloneqq(\mathrm{Ad}_g(x),c)$ for all $g\in G$ and $(x,c)\in\g\times\mathfrak{c}$. This action is Hamiltonian, with moment map given by the projection $\theta_{\g}:\g\times\mathfrak{c}\longrightarrow\g$. 
	
	\begin{remark}\label{remark: Graph closure}
		As in the case of blowing up $\g$ along $\overline{\mathcal{O}}$ discussed in Section~\ref{Subsection: Regular orbits}, we can describe $\widetilde{\g\times\c}$ as a graph closure. 
		We fix homogeneous, algebraically independent generators $f_1,\ldots,f_{\ell}$ of $\mathrm{S}(\g^*)^G$ as before. This allows is to identify $\c=\mathbb{C}^{\ell}$ and $\Gamma=\g\times_{\mathbb{C}^{\ell}} \mathbb{C}^{\ell} \subseteq \g\times\mathbb{C}^{\ell}$. 	Consider the morphism $$\phi:(\g\times\mathbb{C}^{\ell})\setminus(\g\times_{\mathbb{C}^{\ell}}\mathbb{C}^{\ell})\longrightarrow\mathbb{P}^{\ell-1},\quad (x,(z_1,\ldots,z_{\ell}))\mapsto [f_1(x)-z_1:\cdots:f_{\ell}(x)-z_{\ell}]$$ and its graph $\mathrm{gr}(\phi)\subseteq\g\times\mathbb{C}^{\ell}\times\mathbb{P}^{\ell-1}$. Since the ideal of $\Gamma\subseteq\g\times\mathbb{C}^{\ell}$ is generated by $\{f_i\otimes 1-1\otimes x_i:i\in\{1,\ldots,\ell\}\}\subseteq\mathrm{S}(\g^*)\otimes_{\mathbb{C}}\mathbb{C}[x_1,\ldots,x_n]$, by \cite[Section 4]{hau} we can identify $$\widetilde{\g\times\mathfrak{c}}=\overline{\mathrm{gr}(\phi)}\subseteq\g\times\mathbb{C}^{\ell}\times\mathbb{P}^{\ell-1}.$$ The blow-up morphism $\pi$ is obtained by restricting the projection $\g\times\mathbb{C}^{\ell}\times\mathbb{P}^{\ell-1}\longrightarrow\g\times\mathbb{C}^{\ell}$ to $\widetilde{\g\times\mathfrak{c}}$.
	\end{remark}
	
	\begin{theorem}\label{Theorem: Main theorem Lie algebra}
		Endow $\g\times\mathfrak{c}$ with the Poisson Hamiltonian $G$-variety structure described above.
		\begin{itemize}
			\item[\textup{(i)}] The pair $(\g \times \mathfrak{c}, \Gamma)$ satisfies Polishchuk's criterion from Definition~\ref{Definition: Polishchuk}.  In particular, $\widetilde{\g \times \mathfrak{c}}$ is Poisson and $\mathcal{E}_{\Gamma}(\g
			\times \mathfrak{c})= \pi^{-1}(\Gamma)\subseteq\widetilde{\g \times \mathfrak{c}}$ is a Poisson subscheme.
			\item[\textup{(ii)}] For each point $c \in \mathfrak{c}$, there is a canonical scheme isomorphism $\pi^{-1}_{\mathfrak{c}}(c)\cong \mathrm{Bl}_{\tau^{-1}(c)}(\g)$. The following composite morphism is Poisson $$\mathrm{Bl}_{\tau^{-1}(c)}(\g)\overset{\cong}\longrightarrow\pi_{\mathfrak{c}}^{-1}(c)\longhookrightarrow\widetilde{\g \times \mathfrak{c}}.$$
			\item[\textup{(iii)}] The $G$-action on $\g\times\mathfrak{c}$ has a unique lift to a Poisson Hamiltonian $G$-variety structure on $\widetilde{\g\times\mathfrak{c}}$, and the exceptional divisor $\mathcal{E}_{\Gamma}(\g \times \mathfrak{c})\cong \g\times\mathbb{P}^{\ell-1}$ is $G$-invariant. The corresponding moment map is $\pi_{\g}:\widetilde{\g\times\mathfrak{c}}\longrightarrow\g$.
		\end{itemize}
	\end{theorem}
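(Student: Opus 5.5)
Parts (i) and (ii) are direct specializations of Theorem~\ref{Theorem:IntegrableSystem}, so I would dispatch them first. Take $\mathcal{A}=\mathrm{S}(\g^*)$. Its Poisson center is $\mathrm{S}(\g^*)^G$, which is a polynomial algebra on $\ell$ generators and in particular finitely generated. Theorem~\ref{Theorem:IntegrableSystem}(i) then gives (i). Theorem~\ref{Theorem:IntegrableSystem}(ii),(iii) give (ii), once we note that closed points $c\in\mathfrak{c}$ are exactly the points of $\mathfrak{c}=\mathbb{C}^\ell$ and that $\tau^{-1}(c)$ is a regular orbit closure.

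For (iii), I would realize the $G$-action on $\widetilde{\g\times\c}$ concretely through Remark~\ref{remark: Graph closure}. Let $G$ act on $\g\times\mathbb{C}^\ell\times\mathbb{P}^{\ell-1}$ by $g\cdot(x,z,[v])=(\mathrm{Ad}_g(x),z,[v])$. The map $\phi$ is $G$-invariant because each $f_i$ is, so $\mathrm{gr}(\phi)$ and hence its closure $\widetilde{\g\times\c}$ are $G$-stable. With this action $\pi$ is $G$-equivariant, and $\pi_\g$ is equivariant for the adjoint action. Uniqueness of the lift is automatic: two lifts of the action agree on the dense open set $\widetilde{\g\times\c}\setminus\mathcal{E}_\Gamma(\g\times\c)\cong(\g\times\c)\setminus\Gamma$, and $\widetilde{\g\times\c}$ is a smooth (hence separated, reduced) variety. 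Invariance of $\mathcal{E}_\Gamma=\pi^{-1}(\Gamma)$ follows from equivariance of $\pi$ and $G$-invariance of $\Gamma$, which in turn holds because $\tau$ is $G$-invariant. For the identification $\mathcal{E}_\Gamma\cong\g\times\mathbb{P}^{\ell-1}$, I would argue as in Proposition~\ref{Proposition: Regular orbit}. The generators $f_i\otimes1-1\otimes x_i$ form a regular sequence in $\mathrm{S}(\g^*)\otimes\mathbb{C}[x_1,\ldots,x_\ell]$, since after the change of variables $x_i\mapsto f_i-x_i$ they become the variables $x_i$. So the blow-up is cut out by the $2\times2$ minors as in \eqref{Equation:Blowup}, and over $\Gamma\cong\g$ the fiber is all of $\mathbb{P}^{\ell-1}$. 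This gives $\mathcal{E}_\Gamma=\Gamma\times\mathbb{P}^{\ell-1}\cong\g\times\mathbb{P}^{\ell-1}$, equivariantly, since $G$ acts trivially on the $\mathbb{P}^{\ell-1}$ factor.

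It remains to show that the action is Hamiltonian with moment map $\pi_\g$. This means the action preserves the Poisson structure, and for each $\xi\in\g$ the generating vector field of $\xi$ equals the Hamiltonian vector field $\mathrm{H}(\pi_\g^*\xi)$, where $\xi$ is viewed as a linear function on $\g$. I would reduce both claims to the dense open complement of the exceptional divisor. The Poisson bivector on $\widetilde{\g\times\c}$ and its pushforward by each $g\in G$ are algebraic bivector fields on a smooth variety. They agree on $\widetilde{\g\times\c}\setminus\mathcal{E}_\Gamma$, because $\pi$ restricts there to a $G$-equivariant Poisson isomorphism onto $(\g\times\c)\setminus\Gamma$, and on $\g\times\c$ the action is Poisson. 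Hence they agree everywhere. Similarly, the two vector fields $\xi^\sharp$ and $\{\pi_\g^*\xi,\cdot\}$ agree on that dense open set, because $\theta_\g$ is a moment map on $\g\times\c$ and $\pi$ is Poisson. So they agree globally. Equivariance of $\pi_\g$ was established above.

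The main obstacle I foresee is making the density argument rigorous at the scheme level. Concretely, I must check that $\widetilde{\g\times\c}$ is reduced and irreducible, so that agreement of sections of $\bigwedge^2T$ or $T$ on a dense open subset forces global agreement. Smoothness of the blow-up, already noted in the text, handles this. I must also check that the Poisson structure from Theorem~\ref{Theorem: Polishchuk} restricts off the divisor to the transported one, which holds by the uniqueness in Theorem~\ref{Theorem: Polishchuk}(i).
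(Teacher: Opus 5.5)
Your proposal is correct and follows the paper's proof. Parts (i) and (ii) are read off from Theorem~\ref{Theorem:IntegrableSystem}, and (iii) uses the graph-closure model of Remark~\ref{remark: Graph closure}, $G$-invariance of $\phi$, equivariance of $\pi$, and density of $\widetilde{\g\times\c}\setminus\mathcal{E}_{\Gamma}(\g\times\c)$; your bivector/vector-field and uniqueness arguments just make explicit what the paper leaves implicit (though separatedness comes from $\widetilde{\g\times\c}$ being closed in $\g\times\mathbb{C}^{\ell}\times\mathbb{P}^{\ell-1}$, not from smoothness), and the one local difference is that you get $\mathcal{E}_{\Gamma}(\g\times\c)\cong\g\times\mathbb{P}^{\ell-1}$ from the regular-sequence equations of the blow-up rather than from $\mathbb{P}(\nu)$ with the trivialization $(x,[z])\mapsto((x,\tau(x)),[(0,z)])$, both being valid, with your substitution $x_i\mapsto f_i-x_i$ a tidier proof of regularity than Lemma~\ref{Lemma: Regular sequence}.
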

	
	\begin{proof}
		Parts (i) and (ii) are consequences of Theorem~\ref{Theorem:IntegrableSystem}. We now prove (iii). To this end, choose homogeneous, algebraically independent generators $f_1,\ldots,f_{\ell}$ of $\mathrm{S}(\g^*)^G$. We use these generators to freely identify $\mathfrak{c}$ with $\mathbb{C}^{\ell}$ in what follows.
		Consider the normal bundle $\nu\longrightarrow\Gamma$ of $\Gamma\subseteq\g\times\mathfrak{c}=\g\times\mathbb{C}^{\ell}$, as well as the map
		$$f:\g\times\mathbb{C}^{\ell}\longrightarrow\mathbb{C}^{\ell},\quad (x,(z_1,\ldots,z_{\ell}))\mapsto (f_1(x)-z_1,\ldots,f_{\ell}(x)-z_{\ell}).$$ Observe that $\Gamma=f^{-1}(0)$ scheme-theoretically. It follows that $$\mathbb{P}(\nu)\overset{\cong}\longrightarrow\mathcal{E}_{\Gamma}(\g \times \mathfrak{c})\subseteq \g\times\mathbb{C}^{\ell}\times\mathbb{P}^{\ell-1},\quad ((x,\tau(x)),[v])\mapsto(x,\tau(x),[\mathrm{d}f_x(v)])$$ defines an isomorphism of varieties over $\Gamma$; see \cite[Section 22.3]{vak:25} and \cite[Appendix B.6]{ful}. This isomorphism is $G$-equivariant, where the $G$-action on $\mathbb{P}(\nu)$ is induced by $\Gamma\subseteq\g\times\mathfrak{c}=\g\times\mathbb{C}^{\ell}$ being a smooth, $G$-invariant subvariety. It is also clear that 
		$$\g\times\mathbb{P}^{\ell-1}\overset{\cong}\longrightarrow\mathbb{P}(\nu),\quad (x,[z])\mapsto ((x,\tau(x)),[(0,z)])$$ defines a $G$-equivariant variety isomorphism, where $G$ acts on $\g\times\mathbb{P}^{\ell-1}$ via its action on the first factor.
		
		By the identification from Remark~\ref{remark: Graph closure}, we have $$\widetilde{\g\times\mathfrak{c}}=\overline{\mathrm{gr}(\phi)}\subseteq\g\times\mathbb{C}^{\ell}\times\mathbb{P}^{\ell-1}.$$ The moment map $\mu:\widetilde{\g\times\mathfrak{c}}\longrightarrow\g$ is thus obtained by restricting the projection $\g\times\mathbb{C}^{\ell}\times\mathbb{P}^{\ell-1}\longrightarrow\g$ to $\widetilde{\g\times\mathfrak{c}}$. Since $\phi$ is $G$-invariant, $\widetilde{\g\times\mathfrak{c}}$ is invariant under the following action of $G$ on $\g\times\mathbb{C}^{\ell}\times\mathbb{P}^{\ell-1}$: $g\cdot (x,v,[w])=(\mathrm{Ad}_g(x),v,[w])$ for all $g\in G$ and $(x,v,[w])\in\g\times\mathbb{C}^{\ell}\times\mathbb{P}^{\ell-1}$. Note that the blow-up morphism $\pi$ is equivariant with respect to this $G$-action on $\widetilde{\g\times\mathfrak{c}}$. The action is also observed to leave the divisor $\mathcal{E}_{\Gamma}(\g \times \mathfrak{c})\subseteq\widetilde{\g\times\mathfrak{c}}$ invariant, so that $\pi$ restricts to a $G$-equivariant Poisson variety isomorphism $\widetilde{\g\times\mathfrak{c}}\setminus\mathcal{E}_{\Gamma}(\g \times \mathfrak{c})\longrightarrow(\g\times\mathfrak{c})\setminus(\g\times_{\mathfrak{c}}\mathfrak{c})$. We conclude that $\widetilde{\g\times\mathfrak{c}}\setminus\mathcal{E}_{\Gamma}(\g \times \mathfrak{c})$ is a Poisson Hamiltonian $G$-variety with moment map $$\mu\big\vert_{\widetilde{\g\times\mathfrak{c}}\setminus\mathcal{E}_{\Gamma}(\g \times \mathfrak{c})}:\widetilde{\g\times\mathfrak{c}}\setminus\mathcal{E}_{\Gamma}(\g \times \mathfrak{c})\longrightarrow\g.$$ As $\widetilde{\g\times\mathfrak{c}}\setminus\mathcal{E}_{\Gamma}(\g \times \mathfrak{c})$ is open and dense in $\widetilde{\g\times\mathfrak{c}}$, it follows that $\widetilde{\g\times\mathfrak{c}}$ is a Poisson Hamiltonian $G$-variety with moment map $\mu$. This proves (iii).
	\end{proof}	
	
	We show next that this blow-up forms a flat family over $\c$.
	
	\begin{proposition}\label{Proposition: Actions}
		The morphism $\pi_{\mathfrak{c}}:\widetilde{\g\times\mathfrak{c}}\longrightarrow\mathfrak{c}$ is flat.
	\end{proposition}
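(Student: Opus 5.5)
Our plan is to use the miracle flatness criterion: a morphism from a Cohen--Macaulay scheme to a regular scheme whose closed fibers are equidimensional of dimension $\dim(\text{source})-\dim(\text{target})$ is flat. Here $\widetilde{\g\times\c}$ is smooth, as noted at the start of Section \ref{Subsection: Relative version}, hence Cohen--Macaulay. It has pure dimension $\dim(\g)+\ell$, since it is a blow-up of the irreducible variety $\g\times\c$ along the nowhere dense subscheme $\Gamma$, and is therefore irreducible. The base $\c\cong\mathbb{C}^{\ell}$ is smooth. It thus suffices to show that every closed fiber $\pi_{\c}^{-1}(c)$ has pure dimension $\dim(\g)$.

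By Theorem \ref{Theorem: Main theorem Lie algebra}(ii), we have $\pi_{\c}^{-1}(c)\cong\mathrm{Bl}_{\tau^{-1}(c)}(\g)$. Write $\tau^{-1}(c)=\overline{\mathcal{O}}$ for a regular orbit $\mathcal{O}$. Using Equation \eqref{Equation:Blowup} together with Lemma \ref{Lemma: Regular}, this blow-up is the closed subscheme of $\g\times\mathbb{P}^{\ell-1}$ cut out by the $2\times 2$ minors. Because the defining sequence is regular, it coincides with the Proj of the Rees algebra. Set-theoretically, it is the union of two pieces:
\begin{itemize}
\item the closure of the graph of $\phi$ over $\g\setminus\overline{\mathcal{O}}$, which is irreducible of dimension $\dim(\g)$;
\item the exceptional divisor, which by Proposition \ref{Proposition: Regular orbit} is $\overline{\mathcal{O}}\times\mathbb{P}^{\ell-1}$ and has dimension $(\dim\g-\ell)+(\ell-1)=\dim\g-1$.
\end{itemize}
The Rees-algebra description shows the second piece is a Cartier divisor in the first, and $\overline{\mathcal{O}}$ is nowhere dense. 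Hence the fiber is irreducible of dimension $\dim(\g)$. Miracle flatness then finishes the argument.

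The main obstacle is making sure that the scheme-theoretic fiber $\pi_{\c}^{-1}(c)$ has no embedded or extra components lying over $\Gamma$. The dimension count alone handles this, since miracle flatness only needs the dimension of the fiber. We also verify it directly: $\pi_{\c}^{-1}(c)\cap\mathcal{E}_{\Gamma}(\g\times\c)$ is, by the isomorphism $\mathcal{E}_{\Gamma}\cong\g\times\mathbb{P}^{\ell-1}$ of Theorem \ref{Theorem: Main theorem Lie algebra}(iii) composed with $\pi_{\c}$, equal to $\tau^{-1}(c)\times\mathbb{P}^{\ell-1}$. This has dimension $\dim\g-1$, so it cannot be a component of a fiber all of whose components have dimension at least $\dim(\widetilde{\g\times\c})-\ell=\dim\g$ (the lower bound on fiber components of a morphism to an $\ell$-dimensional base).

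Over the complement of the exceptional divisor, $\pi$ is an isomorphism onto $(\g\times\c)\setminus\Gamma$, whose fiber over $c$ is $\g\setminus\tau^{-1}(c)$, of dimension $\dim\g$. Combining the two cases, every component of $\pi_{\c}^{-1}(c)$ has dimension exactly $\dim\g$, and flatness follows.
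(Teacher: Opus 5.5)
Your proposal is correct and follows essentially the same route as the paper: apply miracle flatness to the smooth source $\widetilde{\g\times\c}$ and smooth base $\c$, and obtain fiber dimension $\dim(\g)$ from $\pi_{\c}^{-1}(c)\cong\mathrm{Bl}_{\tau^{-1}(c)}(\g)$. Your additional direct check, using $\mathcal{E}_{\Gamma}(\g\times\c)\cong\g\times\mathbb{P}^{\ell-1}$ and the lower bound on the dimension of fiber components, is a valid independent confirmation that does not need Theorem \ref{Theorem: Main theorem Lie algebra}(ii), and it states the miracle-flatness hypothesis more precisely than the paper's ``all fibers have the same dimension''; it is not needed for the proof.
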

	
	\begin{proof}
		We have that $\widetilde{\g\times\mathfrak{c}}$ and $\mathfrak{c}$ are both smooth. Therefore, by the principle of ``miracle flatness" \cite[Theorem 23.1]{mat:89}, it suffices to prove that all fibers of $\pi_{\mathfrak{c}}$ have the same dimension. This sufficient condition holds by virtue of Theorem \ref{Theorem: Main theorem Lie algebra}(ii) and the fact that $\dim(\mathrm{Bl}_{\tau^{-1}(c)}(\g))=\dim(\g)$ for all $c\in\mathfrak{c}$.
	\end{proof}
	
	We now examine the flat family $\pi_{\mathfrak{c}}:\widetilde{\g\times\mathfrak{c}}\longrightarrow\mathfrak{c}$ in more detail. To this end, let $\mathcal{N}\coloneqq \overline{\mathcal{O}_\text{reg}}\subseteq\g$ denote the nilpotent cone, where $\mathcal{O}_\text{reg}$ is the regular nilpotent orbit. We show that $\pi_{\mathfrak{c}}$ is conical in the following sense.
	\begin{proposition}\label{Proposition: Choice}
		There exist algebraic $\mathbb{C}^{\times}$-actions on $\mathfrak{c}$, $\g\times\mathfrak{c}$, and $\widetilde{\g\times\mathfrak{c}}$ with the following properties:
		\begin{itemize}
			\item[\textup{(i)}] $\Lim{t\rightarrow 0}(t\cdot c)=\mathfrak{o}$ for all $c\in\mathfrak{c}$, where $\mathfrak{o}\in\mathfrak{c}$ is the unique element with $\tau^{-1}(\mathfrak{o})=\mathcal{N}$;
			\item[\textup{(ii)}] the blow-up morphism $\pi:\widetilde{\g\times\mathfrak{c}}\longrightarrow\g\times\mathfrak{c}$ and projection $\theta_{\mathfrak{c}}:\g\times\mathfrak{c}\longrightarrow\mathfrak{c}$ are $\mathbb{C}^{\times}$-equivariant;
			\item[\textup{(iii)}] the subvariety $\Gamma\subseteq\g\times\mathfrak{c}$ and exceptional divisor $\mathcal{E}_{\Gamma}(\g\times\c)\subseteq\widetilde{\g\times\mathfrak{c}}$ are $\mathbb{C}^{\times}$-invariant. 
		\end{itemize}
	\end{proposition}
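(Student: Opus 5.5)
The plan is to use the grading by degree. Fix homogeneous, algebraically independent generators $f_1,\ldots,f_{\ell}$ of $\mathrm{S}(\g^*)^G$ with degrees $d_1,\ldots,d_{\ell}$, and identify $\mathfrak{c}=\mathbb{C}^{\ell}$ as in Remark \ref{remark: Graph closure}. Define the weighted action
$$t\cdot(z_1,\ldots,z_{\ell})\coloneqq(t^{d_1}z_1,\ldots,t^{d_{\ell}}z_{\ell})$$
on $\mathfrak{c}$. Under this identification $\mathfrak{o}=\tau(0)=0$, since each $f_i$ is homogeneous of positive degree. Hence $\tau^{-1}(0)=\mathcal{N}$, and this fiber is reduced by Kostant. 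Because every $d_i>0$, we get $t\cdot c\to 0=\mathfrak{o}$ as $t\to 0$, which gives (i).

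On $\g\times\mathfrak{c}$, let $t$ act by $t\cdot(x,z)\coloneqq(tx,t\cdot z)$. Then $\theta_{\mathfrak{c}}$ is equivariant. Homogeneity gives $f_i(tx)=t^{d_i}f_i(x)$, so each generator $f_i\otimes 1-1\otimes x_i$ of the ideal $\mathcal{I}$ of $\Gamma$ is a weight vector of weight $d_i$ under the induced action on $\mathrm{S}(\g^*)\otimes\mathbb{C}[x_1,\ldots,x_{\ell}]$. It follows that $\mathcal{I}$ is $\mathbb{C}^{\times}$-stable, i.e. $\Gamma$ is invariant, which is the first half of (iii).

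To lift the action to $\widetilde{\g\times\mathfrak{c}}$, I would use the graph-closure model of Remark \ref{remark: Graph closure} and let $t$ act on $\g\times\mathbb{C}^{\ell}\times\mathbb{P}^{\ell-1}$ by
$$t\cdot(x,z,[w_1:\cdots:w_{\ell}])\coloneqq(tx,t\cdot z,[t^{d_1}w_1:\cdots:t^{d_{\ell}}w_{\ell}]).$$
The map $\phi$ is equivariant: $\phi(tx,t\cdot z)=[t^{d_i}(f_i(x)-z_i)]_i$. Therefore the graph $\mathrm{gr}(\phi)$ is invariant, and so is its closure $\widetilde{\g\times\mathfrak{c}}$, since the action is by automorphisms of an algebraic variety. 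The projection to $\g\times\mathbb{C}^{\ell}$ is visibly equivariant, which gives (ii) for $\pi$. Finally, $\mathcal{E}_{\Gamma}(\g\times\mathfrak{c})=\pi^{-1}(\Gamma)$ is the preimage of an invariant set under an equivariant map, so it is invariant. This completes (iii).

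Alternatively, the lift can be obtained intrinsically: a $\mathbb{C}^{\times}$-stable ideal gives a $\mathbb{C}^{\times}$-action on the Rees algebra by graded automorphisms, hence on its Proj. The graph-closure picture is the more concrete route. The only point needing care is algebraicity of the action on the closure, and this holds because the closure is a closed invariant subvariety of a variety carrying an algebraic action. No step looks like a real obstacle; the main thing to check is that equivariance of $\phi$ uses the same weights $d_i$ on the $z$ and $w$ coordinates.
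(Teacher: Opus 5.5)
Your proof is correct. It shares the paper's skeleton: a graph closure in $\g\times\mathbb{C}^{\ell}\times\mathbb{P}^{\ell-1}$, an equivariant projection, and $\mathcal{E}_{\Gamma}(\g\times\c)=\pi^{-1}(\Gamma)$. It differs in two places.

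First, the paper defines the action on $\mathfrak{c}$ using a principal $\mathfrak{sl}_2$-triple. It transports $x\mapsto t^2\mathrm{Ad}_{\lambda(t)}(x)$ on the Kostant slice $\mathcal{S}=e+\g_f$ through the isomorphism $\tau\big\vert_{\mathcal{S}}$. Because $\tau$ is $G$-invariant and the $f_i$ are homogeneous, this is just $z_i\mapsto t^{2d_i}z_i$, i.e. your action precomposed with $t\mapsto t^2$. Your direct use of the grading therefore gives (i) without the slice. It also shows that the action on $\mathfrak{c}$ does not actually depend on the chosen triple.

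Second, and more importantly, the paper lets $\mathbb{C}^{\times}$ act trivially on the $\mathbb{P}^{\ell-1}$ factor and asserts that $\mathrm{gr}(\phi)$ is preserved. That assertion fails as soon as two degrees differ. Take $\g=\mathfrak{sl}_3$ with $(d_1,d_2)=(2,3)$, and pick any $(x,z)$ with $f_i(x)-z_i=1$. The graph point over $(x,z)$ has fiber coordinate $[1:1]$, but $\phi(t\cdot(x,z))=[t^4:t^6]=[1:t^2]$. Since $t\cdot(x,z)\notin\Gamma$, the trivially-acted point is not even in the closure.

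Your weighted action $[w]\mapsto[t^{d_1}w_1:\cdots:t^{d_{\ell}}w_{\ell}]$ is exactly what makes $\phi$ equivariant. It is also what the intrinsic Rees-algebra construction you mention produces automatically, since the degree-one generators $(f_i\otimes 1-1\otimes x_i)t$ have weight $d_i$. So your route is more elementary than the paper's, and it repairs the one step of the paper's argument that does not go through as written.
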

	
	\begin{proof}
		We start be recalling a $\mathbb{C}^\times$-action on $\c$ introduced in \cite{gan-gin:02}. Fix a principal $\mathfrak{sl}_2$-triple $(e,h,f)\in\g^{\times 3}$. Work of Kostant \cite{kos:63} implies that the adjoint quotient $\tau:\g\longrightarrow\mathfrak{c}$ restricts to a variety isomorphism \begin{equation}\label{Equation: Slice isomorphism} \tau\big\vert_{\mathcal{S}}:\mathcal{S}\overset{\cong}\longrightarrow\mathfrak{c},\end{equation} where $\mathcal{S}\coloneqq e+\g_f$ is the Kostant slice. There is a unique Lie algebra morphism $\phi:\mathfrak{sl}_2\longrightarrow\g$ satisfying
		$$\phi\left(\begin{bmatrix}0 & 1\\ 0 & 0\end{bmatrix}\right)=e,\quad \phi\left(\begin{bmatrix}1 & 0\\ 0 & -1\end{bmatrix}\right)=h,\quad\text{and}\quad \phi\left(\begin{bmatrix}0 & 0\\ 1 & 0\end{bmatrix}\right)=f.$$ Since $\operatorname{SL}_2$ is simply-connected, there is a unique algebraic group morphism $\varphi:\operatorname{SL}_2\longrightarrow G$ integrating $\phi$. This morphism has an associated cocharacter
		$$\lambda:\mathbb{C}^{\times}\longrightarrow G,\quad t\mapsto\varphi\left(\begin{bmatrix}t^{-1} & 0\\ 0 & t\end{bmatrix}\right).$$ Observe that
		$t\cdot x\coloneqq t^2\mathrm{Ad}_{\lambda(t)}(x)$ for $t\in\mathbb{C}^{\times}$ and $x\in \mathcal{S}$ is an algebraic action of $\mathbb{C}^{\times}$ on $\mathcal{S}$ satisfying \begin{equation}\label{Equation: Cone point}\Lim{t\rightarrow 0}(t\cdot x)=e\quad \forall \; x\in \mathcal{S}.\end{equation} 
		Equip $\mathfrak{c}$ with the algebraic $\mathbb{C}^{\times}$-action for which the isomorphism \eqref{Equation: Slice isomorphism} is equivariant. Since $\tau(e)=\mathfrak{o}$, \eqref{Equation: Cone point} implies that $\lim_{t\rightarrow 0}(t\cdot c)=\mathfrak{o}$ for all $c\in\mathfrak{c}$. This verifies (i).
		
		We now verify (ii). Consider the $\mathbb{C}^{\times}$-action on $\c$ is defined in the previous paragraph. Let $\mathbb{C}^{\times}$ act on $\g\times\c$ by
		\begin{equation}\label{Equation: Product action} t\cdot (x,c)\coloneqq (t^2x,t\cdot c),\quad t\in\mathbb{C}^{\times},\text{ }(x,c)\in\g\times\c.\end{equation} This action makes the projection $\theta_{\c}$ $\mathbb{C}^{\times}$-equivariant. To define the desired $\mathbb{C}^{\times}$-action on $\widetilde{\g\times\c}$, choose homogeneous, algebraically independent generators $f_1,\ldots,f_{\ell}$ of $\mathrm{S}(\g^*)^{G}$. These choices induce an identification $\c=\mathbb{C}^{\ell}$. Note that $\tau=(f_1,\ldots,f_{\ell}):\g\longrightarrow\mathbb{C}^{\ell}$, implying that \eqref{Equation: Slice isomorphism} is the isomorphism
		$$(f_1,\ldots,f_{\ell})\big\vert_{\mathcal{S}}:\mathcal{S}\overset{\cong}\longrightarrow\mathbb{C}^{\ell}.$$ The $\mathbb{C}^{\times}$-action on $\mathfrak{c}$ is now given by the following $\mathbb{C}^{\times}$-action on $\mathbb{C}^{\ell}$:
		\begin{equation}\label{Equation: New action}t\cdot(z_1,\ldots,z_{\ell})\coloneqq (t^{2d_1}z_1,\ldots,t^{2d_{\ell}}z_{\ell}),\quad t\in\mathbb{C}^{\times},\text{ }(z_1,\ldots,z_{\ell})\in\mathbb{C}^{\ell},\end{equation} where $d_1,\ldots,d_{\ell}\in\mathbb{Z}_{\geq 1}$ are the degrees of $f_1,\ldots f_{\ell}$, respectively. It follows that \eqref{Equation: Product action} is given by
		\begin{equation}\label{Equation: Funny action}t\cdot(x,(z_1,\ldots,z_{\ell}))= (t^2x,(t^{2d_1}z_1,\ldots,t^{2d_{\ell}}z_{\ell})),\quad t\in\mathbb{C}^{\times},\text{ }(x,(z_1,\ldots,z_{\ell}))\in\g\times\mathbb{C}^{\ell}.\end{equation}
		We must prove that \eqref{Equation: Funny action} lifts to an algebraic $\mathbb{C}^{\times}$-action on $\widetilde{\g\times\mathbb{C}^{\ell}}$ under $\pi:\widetilde{\g\times\mathbb{C}^{\ell}}\longrightarrow\g\times\mathbb{C}^{\ell}$. By the identification from Remark~\ref{remark: Graph closure}, we have
		$$\widetilde{\g\times\mathbb{C}^{\ell}}=\overline{\mathrm{gr}(\phi)}\subseteq\g\times\mathbb{C}^{\ell}\times\mathbb{P}^{\ell-1}.$$ Consider the $\mathbb{C}^{\times}$-action on $\g\times\mathbb{C}^{\ell}\times\mathbb{P}^{\ell-1}$ given by 
		$$t\cdot (x,(z_1,\ldots,z_{\ell}),[v])\coloneqq (t^2x,(t^{2d_1}z_1,\ldots,t^{2d_{\ell}}z_{\ell}),[v]),\quad t\in\mathbb{C}^{\times},\text{ }(x,(z_1,\ldots,z_{\ell}),[v])\in\g\times\mathbb{C}^{\ell}\times\mathbb{P}^{\ell-1}.$$ This action clearly makes the projection $\g\times\mathbb{C}^{\ell}\times\mathbb{P}^{\ell-1}\longrightarrow\g\times\mathbb{C}^{\ell}$ equivariant, where $\mathbb{C}^{\times}$ acts on $\g\times\mathbb{C}^{\ell}$ as described in \eqref{Equation: Funny action}. We also observe that $\mathrm{gr}(\phi)$ is preserved by this action, implying that $\overline{\mathrm{gr}(\phi)}\subseteq\widetilde{\g\times\mathbb{C}^{\ell}}$ is preserved as well. As $\pi$ results from restricting the projection $\g\times\mathbb{C}^{\ell}\times\mathbb{P}^{\ell-1}\longrightarrow\g\times\mathbb{C}^{\ell}$ to $\widetilde{\g\times\mathbb{C}^{\ell}}$, it is $\mathbb{C}^{\times}$-equivariant. This completes the proof of (ii).
		
		It remains only to prove (iii). The $\mathbb{C}^{\times}$-invariance of $\Gamma$ follows immediately from \eqref{Equation: Funny action} and the fact that $\tau=(f_1,\ldots,f_{\ell}):\g\longrightarrow\mathbb{C}^{\ell}$. Having previously established that $\pi:\widetilde{\g\times\mathbb{C}^{\ell}}\longrightarrow\g\times\mathbb{C}^{\ell}$ is $\mathbb{C}^{\times}$-equivariant, we see that $\mathcal{E}_{\Gamma}(\g\times\c)=\pi^{-1}(\Gamma)$ is also $\mathbb{C}^{\times}$-invariant.  
	\end{proof}
	
	\begin{remark}
		Note that the $\mathbb{C}^{\times}$-action on $\mathfrak{c}$ depends on a choice of principal $\mathfrak{sl}_2$-triple. The $\mathbb{C}^{\times}$-action on $\widetilde{\g\times\mathfrak{c}}$ depends both on this choice of triple and a choice of homogeneous, algebraically independent generators of $\mathrm{S}(\g^*)^G$.  
	\end{remark}

	\subsection{Rees algebras and Poisson structures}\label{Subsection: Quantization}
	In this section, we discuss the structure of the Rees algebra corresponding to $\widetilde{\g\times\c}$. We start by recording the following general lemma.
	
	\begin{lemma}\label{Lemma: Regular sequence}
		Suppose that a commutative $\mathbb{C}$-algebra $A$ is an integral domain. If $a_1,\ldots,a_n\in A$, then the sequence $(a_1-x_1,\ldots,a_n-x_n)$ is regular in $A[x_1,\ldots,x_n]$.
	\end{lemma}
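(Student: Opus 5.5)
The plan is to prove regularity one step at a time, using the standard substitution isomorphism. For $k\in\{0,\ldots,n\}$, set $B_k\coloneqq A[x_1,\ldots,x_n]/\langle a_1-x_1,\ldots,a_k-x_k\rangle$. I will show that for each $k$ the $A$-algebra map
$$A[x_{k+1},\ldots,x_n]\longrightarrow B_k,\quad x_j\mapsto [x_j],$$
is an isomorphism. Its inverse is induced by the $A$-algebra map $A[x_1,\ldots,x_n]\longrightarrow A[x_{k+1},\ldots,x_n]$ sending $x_i\mapsto a_i$ for $i\leq k$ and $x_j\mapsto x_j$ for $j>k$. This map kills each $a_i-x_i$ with $i\leq k$, so it descends to $B_k$.

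To check that the two maps are mutually inverse, compose them in both orders and compare on generators. In $B_k$ we have $[x_i]=[a_i]$ for $i\leq k$, so the composite $B_k\to A[x_{k+1},\ldots,x_n]\to B_k$ fixes the classes of all $x_i$ and of $A$. The other composite is visibly the identity.

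Under this isomorphism, the image of $a_{k+1}-x_{k+1}$ in $B_k$ corresponds to $a_{k+1}-x_{k+1}\in A[x_{k+1},\ldots,x_n]$. Since $A$ is an integral domain, so is the polynomial ring $A[x_{k+1},\ldots,x_n]$. The element $a_{k+1}-x_{k+1}$ is nonzero, because it has degree one in $x_{k+1}$. A nonzero element of a domain is not a zero-divisor, so $a_{k+1}-x_{k+1}$ is a nonzerodivisor on $B_k$. This is exactly the condition in the definition of a regular sequence recalled in Section \ref{Subsection: Regular orbits}, for each $k=0,\ldots,n-1$.

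If the convention also requires the quotient $B_n$ to be nonzero, this holds because $B_n\cong A\neq 0$. The only step needing any care is the well-definedness and bijectivity of the substitution isomorphism, which is routine; nothing here is a genuine obstacle.
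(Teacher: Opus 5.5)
Your proof is correct, but it takes a different route from the paper. The paper never computes the intermediate quotients. It notes that $(-x_{\sigma(1)},\ldots,-x_{\sigma(n)})$ is a regular sequence in $A[x_1,\ldots,x_n]$ for every $\sigma\in S_n$, and then cites the same perturbation result \cite[Corollary 5.3]{hem} used in Lemma \ref{Lemma: Regular} to pass to $(a_1-x_1,\ldots,a_n-x_n)$.

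You instead eliminate the variables one at a time via the substitution isomorphism $B_k\cong A[x_{k+1},\ldots,x_n]$. Each nonzerodivisor check then reduces to the fact that a nonzero element of a domain is not a zero-divisor.

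The paper's version is a one-line reduction that mirrors Lemma \ref{Lemma: Regular}, where no substitution trick is available because the $f_i$ are not coordinates. Your version buys three things:
\begin{itemize}
\item It is self-contained.
\item It does not depend on the exact hypotheses of the external result. That result is applied to scalar shifts $\alpha_i\in\mathbb{C}$ in Lemma \ref{Lemma: Regular}, whereas here the shifts $a_i$ are arbitrary elements of $A$.
\item It gives more information: every $B_k$ is a polynomial ring over $A$, and $B_n\cong A$.
\end{itemize}
It also shows the domain hypothesis is stronger than necessary. As a polynomial in $x_{k+1}$, the element $a_{k+1}-x_{k+1}$ has unit leading coefficient $-1$, so it is a nonzerodivisor in $A[x_{k+1},\ldots,x_n]$ for any commutative ring $A$. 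Only $A\neq 0$ is needed.
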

	
	\begin{proof}
		Since $A$ is an integral domain, we see that $(-x_{\sigma(1)},\ldots,-x_{\sigma(n)})$ is a regular sequence in $A[x_1,\ldots,x_{n}]$ for all $\sigma\in S_{n}$. This allows us to apply \cite[Corollary 5.3]{hem} and conclude that $(a_1-x_1,\ldots,a_{n}-x_{n})$ is a regular sequence in $A[x_1,\ldots,x_{n}]$.
	\end{proof}
	
	We have $\mathbb{C}[\g\times\mathfrak{c}]=\mathrm{S}(\g^*)\otimes\mathrm{S}(\g^*)^G$.  	Write $\mathcal{I}_{\Gamma}\subseteq\mathrm{S}(\g^*)\otimes\mathrm{S}(\g^*)^G$ for the defining ideal of the graph $\Gamma=\g\times_{\mathfrak{c}}\mathfrak{c}$ in $\g\times\c$. Consider the Rees algebra of $\mathcal{I}_{\Gamma}$, i.e. the $\mathbb{Z}_{\geq 0}$--graded $\mathrm{S}(\g^*)\otimes \mathrm{S}(\g^*)^G$-algebra
	$$\mathrm{Rees}_{\mathcal{I}_{\Gamma}}(\mathrm{S}(\g^*)\otimes\mathrm{S}(\g^*)^G)=\bigoplus_{j=0}^{\infty}(\mathcal{I}_{\Gamma})^jt^j\subseteq(\mathrm{S}(\g^*)\otimes\mathrm{S}(\g^*)^G)[t].$$ As discussed in Section \ref{Subsection: Polishchuk}, we have that $\widetilde{\g\times\c}=\mathrm{Proj}(\mathrm{Rees}_{\mathcal{I}_{\Gamma}}(\mathrm{S}(\g^*)\otimes\mathrm{S}(\g^*)^G))$. To describe this Rees algebra more concretely, we fix a collection $f_1,\ldots,f_{\ell}$ of homogeneous, algebraically independent generators of $\mathrm{S}(\g^*)^G$ that form a regular sequence in $\mathrm{S}(\g^*)$. The ideal $I_{\Gamma}$ is generated by the elements $$f_1\otimes 1-1\otimes f_1,\ldots,f_{\ell}\otimes 1-1\otimes f_{\ell}\in\mathrm{S}(\g^*)\otimes \mathrm{S}(\g^*)^G.$$ By identifying $\mathrm{S}(\g^*)^G$ with $\mathbb{C}[x_1,\ldots,x_{\ell}]$ using $f_1,\ldots,f_{\ell}$, we can apply Lemma~\ref{Lemma: Regular sequence} to show that these generators of $I_\Gamma$ form a regular sequence in $\mathrm{S}(\g^*)\otimes\mathrm{S}(\g^*)^G$. Consider the surjective morphism 
	$$\phi:(\mathrm{S}(\g^*)\otimes\mathrm{S}(\g^*)^G)[x_1,\ldots,x_{\ell}]\longrightarrow\mathrm{Rees}_{\mathcal{I}_{\Gamma}}(\mathrm{S}(\g^*)\otimes\mathrm{S}(\g^*)^G)$$ of $\mathbb{Z}_{\geq 0}$--graded $\mathrm{S}(\g^*)\otimes\mathrm{S}(\g^*)^G$--algebras satisfying $\phi(x_i)=f_i\otimes 1-1\otimes f_i$ for all $i\in\{1,\ldots,\ell\}$. By \cite[Corollary 5.5.6]{hun-swa:06}, $$\mathrm{ker}(\phi)=\langle (f_i\otimes 1-1\otimes f_i)x_j-(f_j\otimes 1-1\otimes f_j)x_i:1\leq i<j\leq\ell\rangle.$$ It follows that $\phi$ descends to a $\mathbb{Z}_{\geq 0}$--graded $\mathrm{S}(\g^*)\otimes\mathrm{S}(\g^*)^G$-algebra isomorphism
	$$\overline{\phi}:(\mathrm{S}(\g^*)\otimes\mathrm{S}(\g^*)^G)[x_1,\ldots,x_{\ell}]\bigg/\bigg\langle (f_i\otimes 1-1\otimes f_i)x_j-(f_j\otimes 1-1\otimes f_j)x_i:1\leq i<j\leq\ell\bigg\rangle\overset{\cong}\longrightarrow\mathrm{Rees}_{\mathcal{I}_{\Gamma}}(\mathrm{S}(\g^*)\otimes\mathrm{S}(\g^*)^G).$$		
	satisfying $\overline{\phi}([x_i])=f_i\otimes 1 - 1\otimes f_i$ for all $i\in\{1,\ldots,\ell\}$. Equip the domain of $\overline{\phi}$ with the unique Poisson bracket for which $\overline{\phi}$ is a Poisson algebra isomorphism, where the Poisson structure on the codomain comes from Proposition~\ref{Proposition: Computation}(ii). This bracket is determined by the following properties:
	\begin{itemize}
		\item the Poisson bracket of two degree-zero elements is the usual Poisson bracket in $\mathrm{S}(\g^*)\otimes\mathrm{S}(\g^*)^G$ of those elements;
		\item $\{[x_i],[x_j]\}=0$ for all $i,j\in\{1,\ldots,\ell\}$.
	\end{itemize}
	
	One natural question that emerges is how to the quantize this Poisson structure. This will be the subject of forthcoming work by the authors.
	
	\bibliographystyle{acm}
	\bibliography{one-step}

\end{document}